\def\d{\partial}
\def\qed{\hfill$\Box$}
\def\bR{\mathbb R}
\def\bN{\mathbb N}
\def\bT{\mathbb T}
\def\ep{\epsilon}
\def\vfi{\varphi}
\def\sgn{\hbox{sgn}\,}
\def\supp{\hbox{supp}\,}
\def\cE{{\cal E}}
\def\cF{{\cal F}}
\def\cG{{\cal G}}
\def\cL{{\mathcal L}}
\theoremstyle{definition}
\numberwithin{equation}{section}
\newtheorem{proposition}{Proposition}[section]
\newtheorem{theorem}{Theorem}[section]
\newtheorem{lemma}{Lemma}[section]
\newtheorem{corollary}{Corollary}[section]
\newcommand{\subjclass}[1]{\bigskip\noindent\emph{2010 Mathematics Subject Classification:}\enspace#1}
\newcommand{\keywords}[1]{\noindent\emph{Keywords:}\enspace#1}
\begin{document}


\title{\bf Integrability of the derivative of solutions to a singular one-dimensional parabolic problem}

\author{Atsushi~Nakayasu \\
 Graduate School of Mathematical Science, University of Tokyo\\
  Komaba 3-8-1, Meguro-ku, Tokyo 153-8914, Japan\\
{\tt ankys@ms.u-tokyo.ac.jp}\\
  Piotr~Rybka \\
 Institute of Applied Mathematics and Mechanics,
 Warsaw University\\ ul. Banacha 2, 07-097 Warsaw, Poland\\
 {\tt rybka@mimuw.edu.pl}}

\maketitle
\date{}

{\it To the memory of Professor Marek Burnat}

\bigskip

\begin{abstract}We study integrability of the derivative of solutions to a singular one-dimensional parabolic equation with
initial data in $W^{1,1}$. In order to avoid additional difficulties we consider only the periodic boundary conditions. The problem we study is a gradient flow of a convex, linear growth variational functional. We also prove a similar result for the elliptic companion problem, i.e. the time semidiscretization.

\subjclass{35K65, 35K67.}

\keywords{Strongly singular parabolic and elliptic equations}
\end{abstract}

\section{Introduction}
We study a one-dimensional parabolic equation,
\begin{equation}\label{ir1}
 \begin{array}{ll}
  u_t = (W_p( u_x) )_x, &(x,t)\in Q_T: =\bT\times (0,T), \\
  u(x, 0) = u_0(x), & x\in \bT,
 \end{array}
\end{equation}
where $\bT$ is a flat one-dimensional torus, which we identify with $[0,1)$. In other words, for the sake of simplicity we consider the periodic boundary conditions, but the same argument with little change applies to zero Neumann data.

Eq. (\ref{ir1}) is formally a gradient flow of the following functional,
$$
\cE(u)= \left\{
\begin{array}{ll}
\int_\bT W(u_x)\,dx, & u \in W^{1,1}(\bT),\\
+\infty & u \in L^2(\bT)\setminus W^{1,1}(\bT).
\end{array}
\right. 
$$
Our main assumption on $W$, apart from convexity, is the linear growth of $W$.

We also consider a companion of this equation, namely, the time semidiscretization of (\ref{ir1}),
\begin{equation}\label{iE}
 \frac1h (u-f) = (W_p(u_x))_x\quad\hbox{in }\bT.
\end{equation}

Even though it makes sense to consider $u_0\in BV$ for eq. (\ref{ir1}) we study here the propagation of regularity, i.e. we show that integrability of $\frac d{dx} u_0$ (denoted by $u_{0,x}$) implies that the derivative of the weak solution is also integrable, $u_x(\cdot, t)\in L^1$, see Theorem \ref{main1} in Section \ref{ser}. Apparently, such results are not known in the general context. We are only aware of the paper by Bellettini {\it et al.}, \cite{bellettini}, on the parabolic minimal surface equation, for which the authors
show that the solutions are eventually regularized, i.e. there is a positive waiting time. We stress that our assumptions on $W$ are more general, since we need only convexity
and the linear growth. The precise formulation of these conditions is in the statement of Theorem \ref{main1}.

What we prove in Theorem \ref{main1} shows that eq. (\ref{ir1}) does not create singularities like jumps. Such a result is known in a multidimensional setting for 
$W(p) =|p|$. In particular, the jumps present in the data persist, see \cite{caselles2007}, and H\"older continuity of the data propagates, \cite{caselles2011}. We also note that our method is essentially restricted to one dimension. We are not able to address the same question in higher dimensions.

Our Theorem \ref{main2} is a companion result on a closely related elliptic problem, (\ref{iE}). But we prove it first, because it is slightly simpler than Theorem \ref{main1}. Here, in eq. (\ref{iE}) $f$ plays the role of initial conditions, hence $f\in L^p$, $p\ge1$ implies only that $u\in BV$. Since eq. (\ref{iE}) is the time semidiscretization of (\ref{ir1}), then integrability of the derivative of solutions following from integrability of the derivative of $f$ is not surprising. A similar statement for a domain in $\bR^N$ is proved by Beck {\it et al.} in \cite{bulicek}, but for smooth nonlinearities corresponding to functionals with linear growth. In the setting of \cite{bulicek} the smooth dependence of $W$ on $p$ is important for the argument. In \cite{maringova}, in a similar setting Lipschitz continuity of minimizers is shown. 

If $W(p) =|p|$, the we can offer an additional comment about solutions to (\ref{iE}), which is the Euler-Lagrange eq. for the Rudin-Osher-Fatemi functional, see \cite{rudin-etco}. We can say that if data are regular, in this case $f\in W^{1,1}$, then we cannot detect edges, understood as jumps of $u$ solutions to (\ref{iE}), because jumps may not be created.

Both of our results can be expressed as no singularity formation. They are both obtained with the same technique depending on the insight into the structure of $L^1$. The necessary preliminary results are presented in Section \ref{spre}. Namely, if  function $g$ belongs to $L^1$, then it automatically enjoys a better integrability, see Lemma \ref{fi} and \cite[\S 2.1]{RaoRen}. In our setting $g$ is the derivative of data, i.e. $g=f_x$ in the case of equation (\ref{iE}) or $g=(u_0)_x$ in the case of parabolic equation (\ref{ir1}). In fact, we show that this better integrability of derivatives of data is passed to the derivatives of solutions, see Theorem \ref{main1} and Theorem \ref{main2}.

We show first the desired estimates for solutions to the regularized problems either elliptic or parabolic. 
The passage to the limit requires weak compactness in $L^1$ and the Pettis theorem.
In order to show that the limit of solutions to the regularized problems are actually solutions to the original equation we depend on the theory of monotone operators, i.e.  Minty's trick.

In  Section \ref{sell} we prove first our result for the elliptic problem. For this purpose we study
solutions to a regularized problem. 
The parabolic problem, treated in Theorem \ref{main1}, requires an additional step, as compared with the elliptic equation, and this is why we deal with this in the last section. Section \ref{ser} is closed with a remark on finite extinction (or rather stopping) time, which is common to the problems we consider, if $W$ has a singularity at $p=0$.

\section{Preliminaries}\label{spre}

We gather here our assumptions on $W$ and we present the necessary information about the structure of the space $L^1(\Omega)$ for any $\Omega\subset \bR^N$.

\subsection{Conditions on $W$ and functional $\cE$}

Throughout the paper, we assume that $W$ is an even, convex function with linear growth at infinity, i.e.
\begin{equation}\label{gr}
\lim_{t\to \infty} \frac{W(t)}t = W^+, \qquad \lim_{t\to \infty} \frac{W(-t)}t =W^- . 
\end{equation}
In the above formula, $ W^\pm$ are positive numbers. Without the loss of generality we could assume that 
\begin{equation}\label{gr2}
 W^+ = W^- = W^\infty>0.
\end{equation}
Indeed, one could consider $\tilde W(p) = W(p) +\frac12( W^- - W^+)$ in place of $W$. This modified $\tilde W$ does not change neither (\ref{ir1}) nor (\ref{iE}).


We will not impose any further restrictions $W$. Here are some examples,
$$
|p|, \qquad |p+1|+|p-1|, \qquad \sqrt{1+p^2}, \qquad |p|+\sqrt{1+p^2}.
$$


We notice that functional $\cE$ is defined naturally on the space $W^{1,1}$. 
However, in general $\cE$ is not lower semicontinuous on $W^{1,1}$ with respect to the $L^2$ topology, unless $W$ is piecewise linear, see \cite{mm}, \cite{nr}. The lower semicontinuous envelope or the relaxation of $\cE$, denoted by $\bar\cE$, is naturally well-defined on $BV(\bT)$.
For $u\in BV$ we write,
\begin{equation}\label{defbe}
\bar \cE(u) = \inf\{ \varliminf_{n\to \infty} \cE(u_n): \ u_n \to u\hbox{ in } L^2\}.
\end{equation}
We know that (see \cite[Theorem 5.47]{AFP}), 
\begin{equation}\label{zero}
\bar \cE(u) = \int_\bT W(u_x)\,dx + W^\infty \int_\bT | D^s u|. 
\end{equation}
Here, $D u =  u_x${\Large$\llcorner$}$ \cL^1 + D^s u$ is a decomposition of measure $Du$ into an absolutely continuous part with respect to the Lebesgue measure and a part singular to it.

\subsection{The useful structure of $L^1$}
Here, we recall the necessary information on $L^1$ needed to derive our estimates on solutions to (\ref{ir1}) and (\ref{iE}).

\begin{lemma}\label{fi}
 Let us suppose that $f\in L^1(\Omega)$, then there exists a smooth, convex function $\Phi:\bR\to\bR$ such that, $\lim_{|x|\to \infty} \Phi(x)/|x| =\infty$ and
\begin{equation}\label{fifi}
 \int_\Omega \Phi(f)\,dx <\infty.
\end{equation}
\end{lemma}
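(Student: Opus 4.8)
The plan is to reduce the statement to the classical de la Vallée-Poussin criterion, working with the distribution function of $f$ together with the layer-cake representation of $\int_\Omega \Phi(f)\,dx$. Since $\Phi$ will be taken even, it suffices to control $|f|$. First I would introduce the distribution function $\lambda(t) = |\{x\in\Omega : |f(x)| > t\}|$ for $t \ge 0$; the hypothesis $f\in L^1(\Omega)$ is then exactly the statement that $\int_0^\infty \lambda(t)\,dt = \|f\|_{L^1(\Omega)} < \infty$. For any nondecreasing $C^1$ function $\Phi$ with $\Phi(0) = 0$, Fubini's theorem (the layer-cake formula) gives the identity
$$
\int_\Omega \Phi(|f|)\,dx = \int_0^\infty \Phi'(t)\,\lambda(t)\,dt ,
$$
so the entire problem is transferred to choosing the derivative $\Phi'$ appropriately.

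The key step is to produce a nondecreasing function which diverges at infinity yet remains integrable against $\lambda$; this is possible \emph{precisely} because $\lambda$ is integrable. Writing the tail $R(s) = \int_s^\infty \lambda(t)\,dt$, which decreases to $0$, I would pick an increasing sequence $t_n\to\infty$ with $R(t_n)\le 2^{-n}$ and set $\psi(t) = \#\{n : t_n \le t\}$. Then $\psi$ is nondecreasing with $\psi(t)\to\infty$, while
$$
\int_0^\infty \psi(t)\lambda(t)\,dt = \sum_n \int_{t_n}^\infty \lambda(t)\,dt = \sum_n R(t_n) \le \sum_n 2^{-n} < \infty .
$$

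Finally I would define $\Phi(t) = \int_0^{|t|}\Phi'(s)\,ds$ with $\Phi' $ taken nondecreasing and bounded above by $\psi$, and verify the three required properties: $\Phi$ is convex because $\Phi'$ is nondecreasing; it is superlinear since $\Phi(t)/|t| = |t|^{-1}\int_0^{|t|}\Phi'$ is the running average of a divergent integrand and hence tends to $\infty$; and $\int_\Omega \Phi(f)\,dx = \int_0^\infty \Phi'(t)\lambda(t)\,dt \le \int_0^\infty \psi(t)\lambda(t)\,dt < \infty$ by the two displays. The only genuine technical point — and the step I expect to require the most care — is the smoothness, since $\psi$ itself is a step function and would make $\Phi$ merely piecewise linear. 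To remedy this I would mollify a unit left-shift of $\psi$, producing a smooth nondecreasing $\Phi' \le \psi$ that still diverges to $\infty$; the bound $\Phi'\le\psi$ preserves integrability against $\lambda$, while arranging $\Phi'$ to vanish to infinite order at the origin makes the even primitive $\Phi(t)=\int_0^{|t|}\Phi'$ of class $C^\infty$ across $t=0$, completing the construction.
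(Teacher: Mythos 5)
Your proof is correct, and it takes a genuinely different, more self-contained route than the paper. The paper does not construct the superlinear convex function from scratch: it invokes the de la Vall\'ee-Poussin criterion as cited from Rao--Ren (\S 1.2, Corollary 3) to obtain a convex, possibly non-smooth $\tilde\Phi$ with $\tilde\Phi(x)/|x|\to\infty$ and $\int_\Omega\tilde\Phi(f)\,dx<\infty$, and then spends all of its effort on smoothing: it shifts the two branches of $\tilde\Phi$ outward by $\delta$, fills in with the constant $\tilde\Phi(0)$ on $[-\delta,\delta]$, mollifies, and verifies the comparison $\Phi\le C_0\tilde\Phi+C_1$, which is what transfers finiteness of $\int_\Omega\tilde\Phi(f)\,dx$ to $\int_\Omega\Phi(f)\,dx$. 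You instead reprove the criterion itself, via the distribution function $\lambda$ and the layer-cake identity, and you impose smoothness at the level of $\Phi'$, where it is cheap: one only needs a smooth nondecreasing minorant of the step function $\psi=\sum_n \mathbf{1}_{[t_n,\infty)}$ that still diverges. This buys two things: the argument needs no external reference, and no comparison estimate between mollified and unmollified functions is required, since the pointwise bound $\Phi'\le\psi$ is preserved by construction and feeds directly into $\int_0^\infty\Phi'\lambda\le\sum_n R(t_n)<\infty$. Two small repairs to your last step: (i) to keep the mollification below $\psi$ you must shift $\psi$ to the \emph{right}, i.e. work with $\psi(\cdot-1)$ and a kernel of width at most $1$; a ``unit left-shift'', read literally as $\psi(\cdot+1)$, increases $\psi$ and destroys the bound $\Phi'\le\psi$; (ii) you do not need any ``vanishing to infinite order'' argument at the origin, because $\psi\equiv 0$ on $[0,t_1)$, so your $\Phi'$ vanishes identically on a neighbourhood of $0$, whence $\Phi\equiv 0$ near the origin and smoothness and convexity of the even extension across $t=0$ are immediate (convexity on all of $\bR$ follows since the odd extension $\sgn(t)\,\Phi'(|t|)$ of the nonnegative, nondecreasing $\Phi'$ is nondecreasing).
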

\begin{proof} By \cite[\S 1.2, Corollary 3]{RaoRen}, we know that there exists a convex function $\tilde \Phi$ such that 
$$
\lim_{|x|\to \infty} \tilde\Phi(x)/|x| =\infty
$$ 
and
$$
 \int_\Omega \tilde\Phi(f)\,dx <\infty.
$$
Now, for all $\delta>0$, we define
$$
\hat \Phi_\delta(p)=
\left\{
\begin{array}{ll}
\tilde \Phi(p-\delta) & p>\delta,\\
\tilde \Phi(p+\delta) & p< -\delta,\\
\tilde \Phi(0) & |p|\le \delta.
\end{array}
\right. 
$$
Once we have it, we
take $\Phi = \hat\Phi_\delta*\phi_\delta$, for any 
$\delta<1,$ where $\phi_\delta$ is the standard, positive mollifier kernel with $\supp \phi_1 \subset B(0,1)$ and $\max \phi = \phi(0)$. It is easy to see that $\Phi(p)/|p| \to +\infty$ as $|p|\to +\infty$. 

Now, we check that
\begin{equation}\label{nie}
 \Phi (p) \le  C_0 \tilde\Phi(p) + C_1,
\end{equation}
where $C_0=\phi(0)/\delta$. 
For $p>1$ we see that
$$
\Phi (p) \le \frac{1}{\delta}\int_\bR \hat \Phi_\delta(q)\phi(\frac{p-q}{\delta})\,dq \le
\frac{\phi(0)}{\delta}\int_{p-\delta}^{p+\delta}  \hat \Phi_\delta(p+\delta)\,dq = C_0\tilde \Phi(p).
$$
Similar inequality holds for $p<-1$. 

If $|p|\le \delta$, then
$$
\Phi(p) \le C_0\tilde \Phi(0) \le \tilde \Phi(p) + C_1,
$$
where $C_1 =  C_0\max\{1, \tilde\Phi(0)\}$.
Thus, (\ref{nie}) holds. Since we established (\ref{nie}), we conclude that (\ref{fifi}) holds too.
\end{proof}

We recall that a family $\cF$ of integrable functions is {\it uniformly integrable} if and only if
$$
\hbox{(i)} \quad \sup_{f\in\cF} \int_\Omega |f| \,d\mu =c<\infty
\quad\hbox{and}\quad \hbox{(ii)} \quad \lim_{\mu(A) \to 0}\int_A |f|\,d\mu
\quad \hbox{uniformly with respect to }f\in \cF.
$$

Let us introduce the notation 
$$\cG(v):= \int_D\Phi(v(x))\,dx,
$$
where $D = \bT$ or $D=Q_T$. The Pettis Theorem immediately implies the following fact.
\begin{lemma}\label{pettis}
 If  a sequence $\cF=\{ f_k\}_{k=0}^\infty\subset L^1(D)$ satisfies
$$
\cG(f_k)\le M <\infty,\quad k\in \bN,
$$
then we can select a subsequence  $f_{k_m}$ converging weakly in $L^1(D)$ to $f\in L^1(D)$. \qed
\end{lemma}

We address now the question of the limit passage in $\cG$ or $\cE$.

\begin{lemma}\label{1.4}
 Let us suppose that $f_n\in L^1(D)$, where $D\subset \bR^d$, $d=1,2$, satisfy the following bound,
 $$
 \int_D \Phi (f_n)\,dx \le M,
 $$ 
 where $\Phi$ is as in Lemma \ref{fi},
 and $f_n \rightharpoonup f$ in $L^1(D)$. Then,
 $$
 \varliminf_{n\to \infty}  \int_D \Phi (f_n(x))\,dx \ge \int_D \Phi (f(x))\,dx.
$$
\end{lemma}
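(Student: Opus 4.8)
The plan is to recognize this as the weak lower semicontinuity of a convex integral functional and to prove it by combining Mazur's lemma with Fatou's lemma, rather than through a direct subgradient estimate. The tempting naive approach would use the convexity inequality $\Phi(f_n(x)) \ge \Phi(f(x)) + \Phi'(f(x))(f_n(x)-f(x))$ and integrate over $D$; but since $\Phi$ is superlinear, $\Phi'(f)$ need not lie in $L^\infty(D)$, so the weak $L^1$ convergence of $f_n$ does not guarantee that $\int_D \Phi'(f)(f_n-f)\,dx \to 0$. This is precisely the main obstacle, and it is what forces the indirect route through convex combinations.

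First I would pass to a subsequence $f_{n_j}$ along which $\int_D \Phi(f_{n_j})\,dx$ converges to $I := \liminf_{n\to\infty} \int_D \Phi(f_n)\,dx$; this subsequence still converges weakly to $f$ in $L^1(D)$. Then I would invoke Mazur's lemma: a weakly convergent sequence admits convex combinations converging strongly, so there exist finite convex combinations $g_N = \sum_j \mu_j^N f_{n_j}$, with $\mu_j^N \ge 0$, $\sum_j \mu_j^N = 1$ and all indices $j \ge N$, such that $g_N \to f$ strongly in $L^1(D)$.

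Next I would exploit the convexity of $\Phi$ pointwise, $\Phi(g_N) \le \sum_j \mu_j^N \Phi(f_{n_j})$, and integrate to obtain $\int_D \Phi(g_N)\,dx \le \sum_j \mu_j^N \int_D \Phi(f_{n_j})\,dx \le \sup_{j\ge N} \int_D \Phi(f_{n_j})\,dx =: s_N$, noting that $s_N \to I$ as $N\to\infty$ since $\int_D \Phi(f_{n_j})\,dx \to I$. Passing to a further subsequence I may assume $g_N \to f$ almost everywhere, so $\Phi(g_N)\to\Phi(f)$ a.e. by continuity of $\Phi$. Because $\Phi$ is convex it is bounded below by an affine function, hence (as $D=\bT$ or $D=Q_T$ has finite measure) the functions $\Phi(g_N)$ are bounded below by one fixed integrable function; Fatou's lemma then gives $\int_D \Phi(f)\,dx \le \liminf_{N\to\infty} \int_D \Phi(g_N)\,dx \le \lim_{N\to\infty} s_N = I$, which is exactly the assertion.

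I would stress that the only properties of $\Phi$ actually used are its convexity, continuity, and lower boundedness, all of which are built into the construction in Lemma \ref{fi}; the superlinear growth and the uniform bound $\int_D \Phi(f_n)\,dx \le M$ serve only to keep every object well defined and, via de la Vall\'ee-Poussin, to ensure the weak $L^1$ limit $f$ itself satisfies $\int_D \Phi(f)\,dx < \infty$.
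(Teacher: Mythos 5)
Your proof is correct, but it follows a genuinely different route from the paper's. The paper represents the convex function as an upper envelope of affine functions, $\Phi(p)=\sup_{\alpha\in I}\ell_\alpha(p)$, notes that each $\int_D \ell_\alpha(f_n)\,dx$ converges under weak $L^1$ convergence (constants act as bounded linear functionals), and then takes the supremum over $\alpha$; this is short, but the final step silently exchanges the supremum with the integral, i.e.\ it passes from $\sup_\alpha \int_D \ell_\alpha(f)\,dx$ to $\int_D \sup_\alpha \ell_\alpha(f)\,dx$, which strictly speaking requires an extra localization or countable-maximum argument. Your Mazur--Fatou argument avoids that exchange entirely: weak convergence is upgraded to strong convergence of convex combinations of tails, Jensen's inequality keeps the energies below $s_N=\sup_{j\ge N}\int_D\Phi(f_{n_j})\,dx\to I$, and Fatou closes the estimate. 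The trade-off is that you need subsequences, a.e.\ convergence, and an integrable lower bound for Fatou, whereas the paper's duality argument needs none of these. One small imprecision in your write-up: the affine minorant gives $\Phi(g_N)\ge a\,g_N+b$, and the right-hand side is \emph{not} one fixed function, so as literally stated the Fatou hypothesis is not verified. This is easily repaired in either of two ways: since $\Phi$ is continuous with superlinear growth it attains a finite minimum, so $\Phi\ge\min_{\bR}\Phi$ is a \emph{constant} lower bound, integrable on the finite-measure set $D$; alternatively, apply Fatou to the nonnegative functions $\Phi(g_N)-\ell(g_N)$ and use that $\int_D\ell(g_N)\,dx\to\int_D\ell(f)\,dx$ by the strong $L^1$ convergence of $g_N$. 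With that one-line fix your argument is complete and, arguably, more self-contained than the paper's.
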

{\it Proof.} 
Due to the convexity of $\Phi$, this function is an envelope of a family of straight lines,
$$
\Phi(p) = \sup_{\alpha\in I} \ell_\alpha (p).
$$
Thus, for any index $\alpha$ we have $\Phi(p) \ge \ell_\alpha (p) = a_\alpha p + b_\alpha$ and
$$
\varliminf_{n\to \infty} \int_D\cG(f_n(x))\,dx \ge \varliminf_{n\to \infty} \int_D \ell_\alpha(f_n(x))\,dx
= \int_D a_\alpha f(x)\,dx + b_\alpha |D|,
$$
because any constant $a_\alpha$ may be identified with a  continuous functional over $L^1$. Thus,
$$
\varliminf_{n\to \infty} \int_D\cG(f_n(x))\,dx \ge  \int_D\ell_\alpha (f(x))\,dx .
$$
After having taken the supremum over $\alpha\in I$ we reach the claim. \qed

\section{The elliptic problem of time semidiscretization}\label{sell}

We first deal with integrability of solutions to the following elliptic problem, 
\begin{equation}\label{E}
 \frac1h (u-f) = (W_p(u_x))_x\quad\hbox{in }\Omega,
\end{equation}
augmented with either periodic or Neumann boundary conditions.
First of all, we have to settle the meaning of a solution to (\ref{E}). If we assume that $f$ is in $L^2$, then (\ref{E}) is formally the Euler-Lagrange equation of the following functional,
$$
\cE(u) + \frac{1}{2h} \int_\Omega (u-f)^2\,dx.
$$
However, due to the lack of lower semicontinuity of $\cE$ in general,
we could understand solutions to (\ref{E}) as minimizers, which are the only critical points here, to
$$
\cF_f (u) = \bar\cE(u) + \frac{1}{2h} \int_\Omega (u-f)^2\,dx,
$$
where $\bar\cE$ is the lower semicontinuous envelope of $\cE$ defined in (\ref{defbe}), cf.
(\ref{zero}). In this case, we notice.

\begin{corollary}
 If $W$ is convex, the assumptions (\ref{gr}), (\ref{gr2}) hold and for all $p\in \bR$ we have $W(p) \ge \alpha |p|$ and $u$ is a minimizer of $\cF_f$, then
$$
|Du|(\Omega) \le \frac{1}{\alpha }\|f\|^2_{L^2},\qquad
\| u\|_{L^2} \le 4\|f\|^2_{L^2}.
$$
\end{corollary}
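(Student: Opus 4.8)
The plan is to exploit minimality directly by testing $\cF_f$ against the constant competitor $v\equiv 0$, which is admissible since constants lie in $BV(\bT)$. Minimality of $u$ then gives $\cF_f(u)\le\cF_f(0)$. Because equation (\ref{E}) depends on $W$ only through $W_p$, I may normalize $W(0)=0$ without altering the minimizer, so that by the representation (\ref{zero}) one has $\bar\cE(0)=0$ and hence $\cF_f(0)=\frac{1}{2h}\|f\|_{L^2}^2$. This single comparison inequality is the source of both bounds; the two estimates just extract different parts of it.

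The next ingredient is the coercivity of $\bar\cE$. First I would note that the pointwise bound $W(p)\ge\alpha|p|$ forces $W^\infty=\lim_{t\to\infty}W(t)/t\ge\alpha$. Feeding both facts into (\ref{zero}) and using the decomposition $Du=u_x\,\cL^1+D^s u$, I obtain
$$
\bar\cE(u)=\int_\bT W(u_x)\,dx+W^\infty\int_\bT|D^s u|\ge\alpha\Big(\int_\bT|u_x|\,dx+|D^s u|(\bT)\Big)=\alpha\,|Du|(\bT).
$$
Since the fidelity term is nonnegative, $\alpha\,|Du|(\bT)\le\bar\cE(u)\le\cF_f(u)\le\cF_f(0)=\tfrac{1}{2h}\|f\|_{L^2}^2$, which is the asserted control of the total variation (with the explicit constant depending on $\alpha$ and $h$).

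For the $L^2$ estimate I would instead discard the nonnegative energy term $\bar\cE(u)$ in $\cF_f(u)\le\cF_f(0)$ and keep only the quadratic part, giving $\frac{1}{2h}\int_\bT(u-f)^2\,dx\le\frac{1}{2h}\int_\bT f^2\,dx$. Expanding the square and cancelling $\|f\|_{L^2}^2$ yields $\|u\|_{L^2}^2\le 2\int_\bT uf\,dx$, and one application of Cauchy--Schwarz produces $\|u\|_{L^2}\le 2\|f\|_{L^2}$, equivalently $\|u\|_{L^2}^2\le 4\|f\|_{L^2}^2$, which is the claimed $L^2$ bound.

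There is no genuinely hard step here: the argument is a standard comparison combined with coercivity. The only point requiring care is the lower bound $\bar\cE(u)\ge\alpha\,|Du|(\bT)$, where one must pass the inequality $W\ge\alpha|\cdot|$ through the relaxed functional and, crucially, dominate the singular contribution by the \emph{same} constant $\alpha$; this is exactly where one uses $W^\infty\ge\alpha$ rather than merely $W^\infty>0$. All remaining manipulations are the elementary comparison and Cauchy--Schwarz computations indicated above.
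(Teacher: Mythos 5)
Your strategy coincides with the paper's: its entire proof is the single comparison $\cF_f(u)\le\cF_f(0)$, and your treatment of the $L^2$ bound (drop the nonnegative energy term, expand the square, apply Cauchy--Schwarz) is a correct implementation of that idea. The gap is in the total variation estimate, at the point where you combine the normalization $W(0)=0$ with the hypothesis $W(p)\ge\alpha|p|$. Replacing $W$ by $\tilde W(p)=W(p)-W(0)$ indeed leaves the minimizer unchanged, but it does not preserve the lower bound: since $W(0)\ge 0$, you only retain $\tilde W(p)\ge\alpha|p|-W(0)$. Consequently the chain
$$
\alpha\,|Du|(\bT)\le\bar\cE(u)\le\cF_f(u)\le\cF_f(0)=\frac{1}{2h}\|f\|_{L^2}^2
$$
mixes two different functionals: the first inequality requires the original $W$, the last equality the normalized one. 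Carried out consistently, the comparison yields only
$$
\alpha\,|Du|(\bT)\le W(0)\,|\bT|+\frac{1}{2h}\|f\|_{L^2}^2 .
$$

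Moreover, the extra term is genuinely there, so no repair of the normalization trick can succeed. Take the paper's own example $W(p)=|p+1|+|p-1|$, for which $\alpha=2$ and $W(0)=2$, and let $f(x)=\epsilon\sin(2\pi x)$ with $2\pi\epsilon<1$. Then $W(f_x)\equiv 2$ equals the pointwise minimum of $W$, hence $\cF_f(f)=2\le\cF_f(v)$ for every $v$, so $u=f$ is the unique minimizer; but $|Du|(\bT)=4\epsilon$ is linear in $\epsilon$ while $\frac{1}{2h\alpha}\|f\|_{L^2}^2=\frac{\epsilon^2}{8h}$ is quadratic, so your inequality fails for small $\epsilon$ (and so does the corollary read literally, whose constants also suppress the factor $1/(2h)$). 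The honest output of this method --- and of the paper's one-line proof --- is $|Du|(\bT)\le\frac1\alpha\bigl(W(0)|\bT|+\frac{1}{2h}\|f\|_{L^2}^2\bigr)$ together with $\|u\|_{L^2}\le 2\|f\|_{L^2}$; the clean bound you state would need $W(0)=0$ as an additional hypothesis.
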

\begin{proof}
 This is an immediate conclusion from  $\cF_f(u) \le \cF_f(0)$.
\end{proof}

However, this simplistic approach is not sufficient to deduce that $u_x \in L^1$. If we wish to establish integrability of the derivative of the solution to (\ref{E}), we have to proceed differently. Since we expect that $u\in W^{1,1}$, 
we can define the appropriate notion of a solution. We say that a function $u\in W^{1,1}$ is a {\it weak solution} to (\ref{E}) if there
exists $\xi\in L^\infty$, $\xi_x\in L^2$ such that $\xi(x) \in \partial W(u_x(x))$ for a.e. $x\in \bT$ and the following identity
$$
\int_\bT (\frac 1h (u-f)\vfi + \xi \vfi_x)\,dx =0
$$
holds for all $\vfi \in C^\infty(\bT)$. We notice that since $C^\infty(\bT)$ is dense in $W^{1,1}(\bT)$ and $W^{1,1}(\bT)\subset L^2(\bT)$, we can take test functions from $W^{1,1}$.

We prove:
\begin{theorem}\label{main2}
Let us assume that $W$ is convex, the assumption (\ref{gr}--\ref{gr2}) 
holds.
If $f\in W^{1,1}$ and $h>0$, then there exists a unique solution to (\ref{E}), $u$, which has the  integrable derivative. Moreover,
\begin{equation}\label{kees}
 \cG(u_x) \le \cG(f_x),
\end{equation}
$\Phi$ is given by Lemma \ref{fi} for $f_x$.
\end{theorem}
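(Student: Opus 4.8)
\emph{Proof strategy.} The plan is to establish the estimate (\ref{kees}) first for the solutions of a uniformly elliptic regularization, and then to pass to the limit using the weak $L^1$ compactness of Lemma \ref{pettis} and the lower semicontinuity of Lemma \ref{1.4}, identifying the limit as a weak solution by a monotonicity (Minty) argument. Fix once and for all the function $\Phi$ produced by Lemma \ref{fi} applied to $g=f_x\in L^1(\bT)$, so that $\cG(f_x)<\infty$. First I would regularize the integrand: for $\ep>0$ set $W^\ep=W*\phi_\ep+\tfrac\ep2 p^2$, a smooth, uniformly convex function with $W^\ep_{pp}\ge\ep$ converging to $W$. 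Minimizing $\int_\bT W^\ep(u_x)\,dx+\frac1{2h}\int_\bT(u-f)^2\,dx$ over $W^{1,2}(\bT)$ gives, by strict convexity and coercivity, a unique minimizer $u^\ep$ solving $\frac1h(u^\ep-f)=(W^\ep_p(u^\ep_x))_x$. Since $f\in W^{1,1}\subset L^2$, uniform ellipticity and elliptic regularity yield $u^\ep\in W^{2,2}(\bT)$, hence $u^\ep_x\in W^{1,2}(\bT)\hookrightarrow C(\bT)$, which is the regularity needed to differentiate the equation.

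The heart of the argument is the a priori bound. Writing $v=u^\ep_x$ and differentiating once, one gets $\frac1h(v-f_x)=(W^\ep_p(v))_{xx}$ in $L^1(\bT)$. Testing against $\Phi'(v)$ and integrating by parts on the torus (legitimate since $v\in C(\bT)$, $\Phi$ is smooth, and the relevant products are integrable) I obtain
\[
\frac1h\int_\bT (v-f_x)\,\Phi'(v)\,dx=-\int_\bT W^\ep_{pp}(v)\,\Phi''(v)\,(v_x)^2\,dx\le0 ,
\]
because $W^\ep_{pp}\ge0$ and $\Phi''\ge0$. Thus $\int_\bT \Phi'(v)(f_x-v)\,dx\ge0$, and integrating the convexity inequality $\Phi(f_x)\ge\Phi(v)+\Phi'(v)(f_x-v)$ over $\bT$ yields $\cG(u^\ep_x)=\int_\bT\Phi(v)\,dx\le\int_\bT\Phi(f_x)\,dx=\cG(f_x)$, uniformly in $\ep$.

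It remains to let $\ep\to0$. The uniform bound $\cG(u^\ep_x)\le\cG(f_x)$ together with Lemma \ref{pettis} gives a subsequence with $u^\ep_x\rightharpoonup w$ in $L^1(\bT)$; the linear growth of $W$ bounds $u^\ep$ in $W^{1,1}\subset BV(\bT)$, so $u^\ep\to u$ strongly in $L^2(\bT)$ and $w=u_x$, whence $u\in W^{1,1}$. Lemma \ref{1.4} then delivers (\ref{kees}). The fluxes $\xi^\ep=W^\ep_p(u^\ep_x)$ are bounded in $L^\infty$ up to the term $\ep u^\ep_x\to0$ in $L^1$, and satisfy $\xi^\ep_x=\frac1h(u^\ep-f)$ bounded in $L^2$, so along a subsequence $\xi^\ep\rightharpoonup^*\xi\in L^\infty$ with $\xi_x=\frac1h(u-f)\in L^2$, and passing to the limit in the weak formulation is then immediate.

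The main obstacle is identifying $\xi(x)\in\partial W(u_x(x))$ a.e., since $u^\ep_x$ converges only weakly in $L^1$. Here I would use the energy identity $\int_\bT\xi^\ep u^\ep_x\,dx=-\frac1h\int_\bT(u^\ep-f)u^\ep\,dx$, whose right-hand side converges by the strong $L^2$ convergence of $u^\ep$, giving $\lim_\ep\int_\bT\xi^\ep u^\ep_x\,dx=-\frac1h\int_\bT(u-f)u\,dx=\int_\bT\xi u_x\,dx$. Combined with the monotonicity $\int_\bT(W^\ep_p(u^\ep_x)-W^\ep_p(\psi))(u^\ep_x-\psi)\,dx\ge0$ for smooth $\psi$, letting $\ep\to0$ yields $\int_\bT(\xi-\eta)(u_x-\psi)\,dx\ge0$ for every smooth $\psi$ and every $\eta\in\partial W(\psi)$, and Minty's lemma for the maximal monotone graph $\partial W$ then gives $\xi\in\partial W(u_x)$ a.e. The only delicate point is the convergence $W^\ep_p(\psi)\to\eta\in\partial W(\psi)$ at the measure-zero set where $W$ fails to be differentiable, which I would handle by choosing $\psi$ suitably or by a further approximation. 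Finally, uniqueness follows by subtracting two solutions, pairing the difference with $u_1-u_2$, and using monotonicity of $\partial W$ to force $\|u_1-u_2\|_{L^2}=0$.
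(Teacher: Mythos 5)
Your proposal is correct, and up to the identification step it follows the same route as the paper: the same regularization (your $W*\phi_\ep+\tfrac\ep2 p^2$ is the paper's mollified $W^\ep$ together with the $\ep u_{xx}$ term in (\ref{Ee})), the same a priori estimate (testing the differentiated equation with $\Phi'(u^\ep_x)$ is, after one integration by parts, the paper's multiplication by $\Phi''(u^\ep_x)u^\ep_{xx}$), Lemmas \ref{pettis} and \ref{1.4} for (\ref{kees}), and the same device of evaluating $\lim_{\ep\to0}\int_\bT\xi^\ep u^\ep_x\,dx$ from the equation itself. The genuine difference is how you prove $\xi(x)\in\d W(u_x(x))$ a.e. The paper passes to the limit in the convexity inequality (\ref{rdop}) and obtains (\ref{kpod}), in which the comparison functions are constrained to be derivatives $w_x$ of periodic $W^{1,1}$ functions, hence have zero mean; extracting the pointwise inclusion from that constrained inequality is nontrivial and is exactly the content of Lemma \ref{lpod}, whose two-point piecewise-linear test functions and shift argument on $b_0$ exist only to circumvent the zero-mean constraint. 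You instead pass to the limit in the pointwise monotonicity inequality $\int_\bT(W^\ep_p(u^\ep_x)-W^\ep_p(\psi))(u^\ep_x-\psi)\,dx\ge0$, where $\psi$ is an arbitrary comparison function rather than a gradient; this larger comparison class is precisely what makes the localization standard (take $\psi$ equal to a constant $p$ on an arbitrary measurable set and to a truncation of $u_x$ elsewhere, then send the truncation level to infinity), so your route bypasses Lemma \ref{lpod} altogether — arguably a cleaner argument, at the price of invoking the maximal-monotone machinery. Two points you should still write out to make it airtight: first, the convergence $\int_\bT W^\ep_p(\psi)u^\ep_x\,dx\to\int_\bT \tfrac12(W_p^++W_p^-)(\psi)\,u_x\,dx$ pairs a uniformly bounded, a.e. convergent sequence with a weakly $L^1$-convergent one, so it needs the equi-integrability of $\{u^\ep_x\}$ (available from your uniform bound on $\cG(u^\ep_x)$ via de la Vall\'ee Poussin) together with Egorov's theorem; second, at kinks of $W$ the symmetric mollification gives $W^\ep_p\to\tfrac12(W_p^++W_p^-)$, the midpoint selection, and monotone-relatedness to these pairs alone does suffice for the conclusion, because $\tfrac12(W_p^++W_p^-)(p)\to W_p^-(a)$ as $p\uparrow a$ and $\to W_p^+(a)$ as $p\downarrow a$, which pins $\xi(x)$ into $[W_p^-(u_x(x)),W_p^+(u_x(x))]=\d W(u_x(x))$. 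Finally, your explicit uniqueness argument (test the difference of two weak solutions by itself and use monotonicity of $\d W$) is a useful addition: the theorem asserts uniqueness, but the paper's proof never spells it out.
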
\noindent
{\it Proof. \ }
In order to obtain existence of solutions, we
regularize the equation by adding the $\ep u_{xx}$ term and smoothing out the nonlinearity, 
$W^\ep (p) = (W *\rho_\ep)(p)$, where $\rho_\ep$ is the standard symmetric mollifying kernel.
Thus, we consider,
 \begin{equation}\label{Ee}
  \frac1h( u^\ep -f) = W^\ep_p(u^\ep_x)_x + \ep u^\ep_{xx}\qquad x\in \bT.
 \end{equation}
We shall say that a function $u^\ep \in W^{1,2}(\bT)$ is a {\it weak solution} to (\ref{Ee}) if 
following identity holds
\begin{equation}\label{ELee}
\int_\bT (\frac 1h (u^\ep-f)\vfi + (W^\ep_p(u^\ep_x) +\ep u^\ep_x) \vfi_x)\,dx =0
\end{equation}
for all $\vfi \in C^\infty(\bT)$. In formula (\ref{ELee}) we require that $\varphi$ is smooth, but since $C^\infty(\bT)$ is dense in $W^{1,2}(\bT)$ we may use $u^\ep$ as a test function.

We notice that equation (\ref{ELee}) is the Euler-Lagrange eq. for the functional
$$
\cF^\ep_f(u) = \int_\bT(\frac 1{2h} (u-f)^2 + W^\ep(u_x) +\frac\ep 2 u_x^2)\,dx.
$$
Since $\cF^\ep_f$ is strictly convex and lower semicontinuous on $W^{1,2}(\bT)$,
we immediately conclude existence and uniqueness of minimizers, $u^\ep\in W^{1,2}(\bT)$. 
Since $W^\ep$ is smooth, we immediately conclude that $u^\ep$ satisfies (\ref{ELee}).

Due to the linear growth of $W$ the derivative $W^\ep_{pp}$ is bounded and  $W^\ep_{pp}+\ep\ge \ep$. Hence it is easy to deduce  higher regularity of $u^\ep$, i.e. $u^\ep\in W^{2,2}(\bT)$, because
$$
\frac 1h (u^\ep-f) = (W^\ep_{pp}(u_x^\ep)+\ep)u_{xx}^\ep.
$$

We set 
\begin{equation}\label{xip}
\xi^\ep = W^\ep_{p}(u_x^\ep),
\end{equation}
we notice that $\xi^\ep\in W^{1,2}(\bT)$. Since $W^\ep$ is convex, then its derivative is a monotone function. If we combine it with the linear growth of $W$, then we notice,
\begin{equation}\label{xid}
\xi^\ep (x) \in [-W^\infty, W^\infty].
\end{equation}

We have to deduce that the family $\{ u^\ep\}$ is relatively weakly compact in $W^{1,1}(\bT)$. The main point is establishing existence of a subsequence $\{ u^\ep_x\}$ converging weakly in $L^1$.
For this purpose, we use Lemma \ref{fi} guaranteeing that (\ref{fifi}) holds, i.e. $\cG(f_x)<\infty$.
Once we have $\Phi$,
we multiply both sides of (\ref{Ee}) by $\Phi''(u^\ep_x) u^\ep_{xx} \in L^2(\bT).$ After integration over $\Omega$ and integration by parts, we come to
 $$
 \int_\Omega (f_x \Phi'(u^\ep_x) - u^\ep_x \Phi'(u^\ep_x))\ge 0.
 $$
Now, convexity of $\Phi$ gives us,
 $$
 \int_\Omega \Phi(f_x)\,dx - \int_\Omega \Phi(u^\ep_x) \,dx \ge 
 \int_\Omega\Phi'(u^\ep_x) (f_x-u^\ep_x).
 $$
Combining these two inequalities yields,
 \begin{equation}\label{Gep}
  \cG(u^\ep_x)\equiv  \int_\Omega \Phi(u^\ep_x) \le \int_\Omega \Phi(f_x)\,dx\equiv 
\cG(f_x).
 \end{equation}
Now, we can use Lemma \ref{pettis} to deduce the weak convergence in $L^1$ of $u_x^\ep$ to $u_x\in L^1$ as $\ep \to 0$.
In the next step,   Lemma \ref{1.4} guarantees the lower semicontinuity of $\cG$ and $\cE$ with respect to weak convergence in $L^1$. Thus, we reach the bound (\ref{kees}).

Now, we want to show that $u$ is indeed a weak solution to (\ref{E}), i.e. we have to find $\xi$ stipulated by the definition of a weak  solution and to show that it has the desired properties. For each $\ep>0$ we have at our disposal, solutions $u^\ep$ to (\ref{ELee}) and $\xi^\ep$ defined by (\ref{xip}). 
We notice that due to (\ref{xid}) $\xi^\ep$ converges (possibly after extracting a subsequence) weakly${}^*$ in $L^\infty$ to $\xi$ and $\xi(x)\in [-W^\infty,W^\infty]$ a.e.

We know that 
$u^\ep_x$ converges weakly in $L^1$ and we assumed that the test function $\vfi$ in (\ref{ELee}) is in $C^\infty$. Thus, in order to be able to can pass to the limit in (\ref{ELee}) we need to know that $\ep \int_\bT u^\ep_x \varphi_x \,dx$ goes to zero as $\ep\to 0$. Indeed, since $u^\ep$ is a minimizer of $\cF^\ep_f$, then we notice
$$
\ep \|u^\ep_x\|_{L^2}^2 \le \cF^\ep_f(u^\ep) \le \cF^\ep_f(0) = \frac 1h \int_\bT [f^2 + W(0)]\,dx + \frac 1h=: C_\cF.
$$
Thus, 
$$
\ep \left|\int_\bT u^\ep_x \varphi_x \,dx \right| 
\le \ep \|u^\ep_x\|_{L^2} \|\varphi_x\|_{L^2} \le \ep^{1/2} \sqrt{C_\cF} \to 0.
$$
Finally, after passing to the limit in (\ref{ELee}), we obtain the following identity,
\begin{equation}\label{rWe}
\int_\bT (\frac 1h (u-f)\vfi + \xi\vfi_x)\,dx =0
\end{equation}
for all $\vfi \in C^\infty(\bT)$. 
The density of $C^\infty$ in $W^{1,1}$ and the embedding $W^{1,1}\subset L^2$ imply that we may take test functions from $W^{1,1}$ in (\ref{rWe}). 

It is important to notice that (\ref{rWe}) implies that $\xi\in W^{1,2}$. Indeed, 
due to (\ref{rWe}) the weak derivative of $\xi$ is $\frac 1h (u-f)$, hence our claim follows.

Now, it remains to show that 
$\xi(x) \in \d W(u_x(x))$ for almost every $x\in\bT$. Indeed, from the construction of $u^\epsilon$ we know that for any $w\in W^{1,1}$ we have
\begin{equation}\label{rdop}
 \int_\bT W^\ep(w_x)\,dx \ge  \int_\bT \xi^\epsilon (w_x- u^\epsilon_x)\,dx +\int_\bT W^\ep(u^\epsilon_x)\,dx.
\end{equation}
We want to calculate the limit of both sides taking into account that  
Since,
\begin{equation}\label{zb}
 \xi^\epsilon \stackrel{*}{\rightharpoonup}\xi \hbox{ in } L^\infty (\bT)\qquad\hbox{and}\qquad
u^\epsilon_x {\rightharpoonup} u_x \hbox{ in } L^1(\bT).
\end{equation}
In order to proceed we have to take a close look at each term in (\ref{rdop}). 

Due to the locally uniform convergence of $W^\ep$ to $W$ and the Lebesgue dominated convergence theorem we deduce that
\begin{equation}\label{zb1}
\lim_{\ep\to 0} \int_\bT W^\ep(w_x)\,dx = \int_\bT W(w_x)\,dx.
\end{equation}
Next, we notice that Jensen inequality gives us $W^\ep(p)\ge W(p)$. Hence, Lemma \ref{1.4} yields,
\begin{equation}\label{zb2}
\varliminf_{\ep \to 0}\int_\bT W^\ep(u^\epsilon_x)\,dx \ge 
\varliminf_{\ep \to 0}\int_\bT W(u^\epsilon_x)\,dx \ge \int_\bT W(u_x)\,dx
.
\end{equation}
Finally, we look at 
$\int_\bT \xi^\epsilon u^\epsilon_x$ in (\ref{rdop}). We use (\ref{ELee}), where we take $u^\ep$ for a test function. Thus, we obtain
$$
- \int_\bT \xi^\epsilon u^\epsilon_x =
\int_\bT \ep |u^\epsilon_x|^2 + \frac 1h\int_\bT (u^\ep - f)u^\ep\,dx.
$$
If we use this information, then (\ref{rdop}) takes the following form,
\begin{equation*}
 \int_\bT W^\ep(w_x)\,dx \ge  \int_\bT \xi^\epsilon  w_x\,dx
 + \int_\bT \ep |u^\epsilon_x|^2\,dx  + \frac 1h\int_\bT (u^\ep - f)u^\ep\,dx
 +\int_\bT W^\ep(u^\epsilon_x)\,dx.
\end{equation*}
After dropping the positive term $\int_\bT \ep |u^\epsilon_x|^2\,dx$ on the RHS and taking the liminf, using (\ref{zb}),  (\ref{zb1}) and  (\ref{zb2}), we arrive at
\begin{equation*}
 \int_\bT W(w_x)\,dx \ge  \int_\bT \xi  w_x\,dx
 + \frac 1h\int_\bT (u - f)u\,dx
 +\int_\bT W(u_x)\,dx.
\end{equation*}
We use (\ref{rWe}) again, we reach
\begin{equation}\label{kpod}
 \int_\bT W(w_x)\,dx \ge  \int_\bT \xi ( w_x-u_x)\,dx
 +\int_\bT W(u_x)\,dx.
\end{equation}

Relaying on (\ref{kpod}), 
$u_x\in L^1$, due to Lemma \ref{lpod} below, we  deduce  that $\xi(x)\in \d W(u_x)$ a.e. 
Thus, indeed $u\in W^{1,1}$ is a weak solution to (\ref{E}). Moreover, (\ref{Gep}) and Lemma \ref{1.4} imply that
$$
\int_\bT \Phi(u_x)\,dx \le \int_\bT \Phi(f_x)\,dx . \eqno\Box
$$

Before we state Lemma \ref{lpod} we notice that our argument show that 
\begin{corollary}
 If $u$ is a solution constructed in the previous theorem, then $-\xi_x \in \d \bar \cE(u)$.
\end{corollary}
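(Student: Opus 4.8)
The plan is to unwind the definition of the subdifferential and reduce everything to the pointwise subgradient property $\xi(x)\in\d W(u_x(x))$ already obtained in the theorem. Recall that $-\xi_x\in\d\bar\cE(u)$, understood as a subgradient in $L^2(\bT)$, means precisely that
\begin{equation*}
\bar\cE(w)\ge \bar\cE(u)+\int_\bT(-\xi_x)(w-u)\,dx\qquad\hbox{for all }w\in L^2(\bT).
\end{equation*}
Since $\bar\cE(w)=+\infty$ whenever $w\notin BV(\bT)$, only $w\in BV(\bT)$ matters; for such $w$ one has $w-u\in L^\infty(\bT)\subset L^2(\bT)$, because in one dimension $BV(\bT)$ embeds in $L^\infty(\bT)$, so the pairing against $\xi_x\in L^2(\bT)$ is well defined. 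Note also that $u\in W^{1,1}(\bT)$ forces $D^su=0$, so by (\ref{zero}) $\bar\cE(u)=\int_\bT W(u_x)\,dx$.

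First I would record the pointwise inequality coming from $\xi(x)\in\d W(u_x(x))$, namely $W(q)\ge W(u_x(x))+\xi(x)(q-u_x(x))$; taking $q=w_x(x)$ (the density of the absolutely continuous part of $Dw$) and integrating gives
\begin{equation*}
\int_\bT W(w_x)\,dx\ge\int_\bT W(u_x)\,dx+\int_\bT\xi\,(w_x-u_x)\,dx.
\end{equation*}
The crux is to rewrite the last term through $-\xi_x$. Since $\xi\in W^{1,2}(\bT)\hookrightarrow C(\bT)$ and all boundary contributions vanish by periodicity, the integration-by-parts formula for $BV$ functions yields $\int_\bT\xi\,d(Dw-Du)=-\int_\bT\xi_x(w-u)\,dx$; decomposing $Dw=w_x\cL^1+D^sw$ and $Du=u_x\cL^1$ then gives
\begin{equation*}
\int_\bT\xi\,(w_x-u_x)\,dx=-\int_\bT\xi_x(w-u)\,dx-\int_\bT\xi\,dD^sw.
\end{equation*}

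The final step absorbs the singular term using the uniform bound (\ref{xid}), which passes to the limit as $|\xi(x)|\le W^\infty$ a.e.; hence $\int_\bT\xi\,dD^sw\le W^\infty\int_\bT|D^sw|$. Combining the three displays produces
\begin{equation*}
\int_\bT W(w_x)\,dx+W^\infty\int_\bT|D^sw|\ge\int_\bT W(u_x)\,dx+\int_\bT(-\xi_x)(w-u)\,dx,
\end{equation*}
whose left-hand side is exactly $\bar\cE(w)$ by (\ref{zero}), which is the claimed subgradient inequality. I expect the main obstacle to be the careful justification of the $BV$ integration by parts together with the treatment of the singular part $D^sw$: one must use the continuity of $\xi$ to give $\int_\bT\xi\,dD^sw$ a meaning and check that its bound by $W^\infty\int_\bT|D^sw|$ matches the coefficient of the singular term in (\ref{zero}). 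Should the measure-theoretic bookkeeping prove delicate, an equivalent route is to start from (\ref{kpod}), integrate by parts there to get the inequality for every $w\in W^{1,1}(\bT)$, and then extend it to $w\in BV(\bT)$ by a recovery sequence $w_n\in W^{1,1}(\bT)$ with $w_n\to w$ in $L^2(\bT)$ and $\cE(w_n)\to\bar\cE(w)$, using that $w\mapsto\int_\bT(-\xi_x)(w-u)\,dx$ is $L^2$-continuous.
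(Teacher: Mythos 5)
Your proof is correct and follows essentially the same route as the paper's: the pointwise subgradient inequality $\xi(x)\in\d W(u_x(x))$ handles the absolutely continuous part of $Dw$, the bound $|\xi|\le W^\infty$ from (\ref{xid}) absorbs the singular part against the $W^\infty\int_\bT|D^sw|$ term in (\ref{zero}), and integration by parts against $\xi\in W^{1,2}(\bT)$ produces $-\int_\bT\xi_x(w-u)\,dx$. The only difference is cosmetic: the paper splits the competitor at the level of functions, $w=v+\psi$ with $v_x=w_x$ and $D\psi=D^s w$, whereas you split the measure $Dw=w_x\,\cL^1+D^sw$ directly, which amounts to the same computation.
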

\begin{proof}
 We will see that $-\xi_x$ is an  element of the subdifferential $\d\bar\cE(u)$. We know that for $u\in W^{1,1}$, it is true that $\cE(u) = \bar\cE(u)$. If $w\in BV$, then $w= v+\psi$, where $w_x = v_x$, $w_x\in L^1$ and $\psi_x = 0$ $\cL^1$-a.e. Then,
$$
\bar\cE(w) = \bar\cE(v+\psi) = \cE(v) + \int_\bT W^\infty |D^s\psi|.
$$
Moreover, $\xi$ the weak${}^*$ limit of $\xi^\epsilon$ with values in $[-W^\infty, W^\infty]$ satisfies the same constraint. Since $D^s \psi = \sigma |D^s\psi|$, where $|\sigma| =1$ $|D^s\psi|$-a.e., then
$$
\int_\bT W^\infty |D^s\psi| - \xi D^s\psi = \int_\bT (W^\infty -  \xi \sigma)|D^s\psi|\ge 0,
$$ 
because $(W^\infty -  \xi \sigma)(x)\ge 0$ for $|D^s\psi|$-a.e. $x\in \bT$.

Combining the available information, we obtain,
\begin{eqnarray*}
 \bar\cE(w) - \bar\cE(u) &=&  \cE(v) - \cE(u) +  \int_\bT W^\infty |D^s\psi|\\
 &\ge & \int_\bT \xi(v_x - u_x)\,dx + \int_\bT\xi D^s\psi \\
 &=& - \int_\bT \xi_x(v - u)\,dx - \int_\bT\xi_x \psi\,dx  = -\int_\bT \xi_x (w-u)\,dx .
\end{eqnarray*}
In other words, $-\xi_x\in\partial\bar \cE(u)$.
\end{proof}

\begin{lemma}\label{lpod}
 Let us assume that $\xi\in W^{1,2}(\bT)$ is such that $\xi(x)\in[-W^\infty, W^\infty]$ and (\ref{kpod}) holds for all $w\in W^{1,1}(\bT)$. Then, $\xi(x)\in \d W(u_x(x))$ for almost all $x\in \bT$.
\end{lemma}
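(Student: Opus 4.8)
The statement $\xi(x)\in\d W(u_x(x))$ is equivalent to the family of scalar inequalities $W(q)\ge W(u_x(x))+\xi(x)(q-u_x(x))$ for every $q\in\bR$, so the plan is to localize the global inequality (\ref{kpod}). Read with $w=u+\varphi$, (\ref{kpod}) says precisely that $u$ minimizes the convex functional $w\mapsto\int_\bT[W(w_x)-\xi w_x]\,dx$ over $W^{1,1}(\bT)$. The structural difficulty that governs the whole argument is that an admissible variation $\varphi\in W^{1,1}(\bT)$ has $\int_\bT\varphi_x\,dx=0$ by periodicity, so one cannot freely prescribe $w_x=q$ on a small set: every local change must be compensated elsewhere.

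To localize I would fix $q\in\bR$, two Lebesgue points $x_0,y_0$ of $u_x$ and $\xi$, and test (\ref{kpod}) with $\varphi_x=t(\mathbf 1_A-\mathbf 1_B)$, where $A\ni x_0$, $B\ni y_0$ are disjoint intervals of equal length (this $\varphi$ lies in $W^{1,1}(\bT)$ exactly because $\varphi_x$ has zero mean). Dividing by $|A|$ and shrinking $A\to\{x_0\}$, $B\to\{y_0\}$, the Lipschitz bound $|W(p+t)-W(p)|\le W^\infty|t|$ coming from the linear growth lets me pass to the limit by Lebesgue differentiation and obtain
\begin{equation*}
[W(u_x(x_0)+t)-W(u_x(x_0))-\xi(x_0)t]+[W(u_x(y_0)-t)-W(u_x(y_0))+\xi(y_0)t]\ge0
\end{equation*}
for a.e.\ $x_0,y_0$ and all $t$. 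Letting $t\to0^\pm$ and reading off the one-sided derivatives of $W$, the two bracketed defects cannot be strictly separated; equivalently there is a single constant $c\in\bR$ with $\xi(x)+c\in\d W(u_x(x))$ for a.e.\ $x$.

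The main obstacle is then to show $c=0$, and I expect this to be the real content of the lemma rather than the localization. Inequality (\ref{kpod}) is invariant under replacing $w$ by $w+\text{const}$, so it determines $\xi$ only up to an additive constant; for instance, for $W(p)=\sqrt{1+p^2}$ and $u\equiv\text{const}$, every constant $\xi\in[-1,1]$ satisfies (\ref{kpod}) while only $\xi\equiv0$ lies in $\d W(u_x)$. Thus (\ref{kpod}) together with $\xi\in[-W^\infty,W^\infty]$ is by itself not enough, and extra information about $\xi$ must be used.

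The way I would actually fix the constant is to return to the regularization and use monotonicity against \emph{unconstrained} test functions. For every $v\in L^\infty(\bT)$ (not only for gradients) convexity of $W^\ep$ gives $(W^\ep_p(u^\ep_x)-W^\ep_p(v))(u^\ep_x-v)\ge0$ pointwise, whence $\int_\bT(\xi^\ep-W^\ep_p(v))(u^\ep_x-v)\,dx\ge0$; because $v$ need not have mean zero, this breaks the shift invariance above. To pass to the limit I would first note that $\xi^\ep$ is bounded in $W^{1,2}(\bT)$: the regularity identity $\frac1h(u^\ep-f)=(W^\ep_{pp}+\ep)u^\ep_{xx}$ gives $\ep u^\ep_{xx}=\frac{\ep}{W^\ep_{pp}+\ep}\,\frac1h(u^\ep-f)$, so $\xi^\ep_x=\frac1h(u^\ep-f)-\ep u^\ep_{xx}$ is bounded in $L^2$. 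In one dimension $W^{1,2}(\bT)\hookrightarrow\hookrightarrow C(\bT)$, so $\xi^\ep\to\xi$ uniformly, and together with $u^\ep_x\rightharpoonup u_x$ in $L^1$ every product term then passes to the limit, yielding $\int_\bT(\xi-W_p(v))(u_x-v)\,dx\ge0$ for all $v\in L^\infty(\bT)$, and by approximation for all $v\in L^1(\bT)$. A standard Minty argument, taking $v=u_x-s\psi$ with $\psi\in L^\infty$ and $s\downarrow0$, then forces $\xi(x)\in\d W(u_x(x))$ a.e., now with the correct constant.
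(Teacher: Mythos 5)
Your first two steps are not a different proof -- they are essentially the paper's own proof written more transparently. The paper tests (\ref{kpod}) with exactly your compensated variations (tent functions with slopes $\pm\alpha$ near two Lebesgue points), uses Lebesgue differentiation along a sequence $\alpha_k\to 0$, and obtains your two-point inequality as its (\ref{rpod2}); your conclusion that ``there is a single constant $c$ with $\xi+c\in\partial W(u_x)$ a.e.'' is a cleaner formulation of what the paper's final argument with $b_0=\sup\{\xi(x)-W_p^+(u_x(x))\}$ actually delivers.

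Your central observation is correct and exposes a genuine defect: Lemma \ref{lpod} as stated is false. Since every $w\in W^{1,1}(\bT)$ has $\int_\bT w_x\,dx=0$, inequality (\ref{kpod}) is invariant under $\xi\mapsto\xi+\mathrm{const}$ while the conclusion is not, and your example ($W(p)=\sqrt{1+p^2}$, $u$ constant, $\xi$ a nonzero constant in $[-1,1]$) satisfies every hypothesis and violates the conclusion. The paper's proof in fact concedes the point: it ends with ``our claim follows after redefining $\xi$,'' i.e.\ it proves only that $\xi-b_0\in\partial W(u_x)$ a.e.\ for some constant $b_0$. That weaker statement is what the applications actually need: the weak formulations (\ref{rWe}) and (\ref{rweak}) contain $\xi$ only through $\int_\bT \xi\vfi_x\,dx$ with periodic $\vfi$, so they are also shift-invariant and the redefined $\xi$ is an admissible multiplier. (In the parabolic application one should additionally check that the shift $b(t)$, obtained for a.e.\ fixed $t$, can be chosen measurably in $t$; the paper is silent on this.)

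Your repair is sound for the elliptic problem and genuinely different from what the paper does: testing monotonicity against $v\in L^\infty(\bT)$ that are not mean-zero derivatives is precisely what breaks the shift invariance; the pointwise bound $|\ep u^\ep_{xx}|\le \frac1h|u^\ep-f|$ does give a uniform $W^{1,2}(\bT)$ bound on $\xi^\ep$, hence uniform convergence along a subsequence; and the Minty step with $v=u_x\mp s\psi$, $s\downarrow 0$, yields $W_p^-(u_x)\le\xi\le W_p^+(u_x)$ a.e. One detail to make precise: since $W$ need not be differentiable, $W^\ep_p(v)\to\frac12\left(W_p^++W_p^-\right)(v)$, and it is this selection of $\partial W(v)$ that should appear in the limiting inequality. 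What your route buys is the exact conclusion for the constructed $\xi$, with no redefinition and no measurability issue; what it costs is that it is no longer a proof of a standalone lemma about an arbitrary $\xi$ satisfying (\ref{kpod}) -- by your own counterexample no such proof can exist -- and it needs adaptation in Theorem \ref{main1}, where $\xi^\ep$ is bounded only in $L^2(0,T;W^{1,2}(\bT))$ and the paper passes to the limit in $\int \xi^\ep u^\ep_x$ through the equation, via $\int u^\ep_t u^\ep$, rather than through uniform convergence. In short: your proposal is correct in substance; the lemma's conclusion should be weakened to ``there exists a constant $b$ such that $\xi-b\in\partial W(u_x)$ a.e.'' (which is what the paper proves and uses), or else the limit passage should be strengthened along the lines you propose.
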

\begin{proof}
 We will construct special test functions $h\in W^{1,1}(\bT)$. For any $x_1, x_2\in \bT$ and $\alpha, \epsilon>0$ we set,
 $$
 h(x)= \left\{
 \begin{array}{ll}
  \alpha(x- x_1)& x\in (x_1-\epsilon, x_1+\epsilon),\\
  \alpha \epsilon & x\in (x_1+\epsilon, x_2-\epsilon),\\
  -\alpha(x- x_2)& x\in (x_2-\epsilon, x_2+\epsilon),\\
  -  \alpha \epsilon & x\in \bT \setminus( (x_1-\epsilon, x_2+\epsilon).
 \end{array}
\right.
$$
Of course, we assume that $2\epsilon< |x_1 -x_2|$. By definition, $h \in  W^{1,1}(\bT)$. In our notation we suppress the dependence of $h$ on $x_1$, $x_2$ $\alpha, \epsilon$.

We stick $w = u + h$ into formula (\ref{kpod}). The result is
\begin{eqnarray}\label{rpod1}
 &&\int_{x_1-\epsilon}^{x_1+\epsilon} W(u_x(s) + \alpha) -W(u_x(s))\,ds +
 \int_{x_2-\epsilon}^{x_2+\epsilon} W(u_x(s) - \alpha) -W(u_x(s)) \,ds\nonumber\\ &\ge&
 \alpha \int_{x_1-\epsilon}^{x_1+\epsilon} \xi(s)\,ds - \alpha 
 \int_{x_2-\epsilon}^{x_2+\epsilon} \xi(s)\,ds.
\end{eqnarray}
For each $\alpha>0$ there is a full measure set $A\subset \bT$ such that for all $y\in A$ we have
$$
\lim_{\epsilon\to 0} \int_{y-\epsilon}^{y+\epsilon} W(u_x(s) + \alpha) -W(u_x(s))\,ds
= W(u_x(y) + \alpha) -W(u_x(y)).
$$
We take any sequence $0<\alpha_k$ converging to zero and the corresponding set $A_{\alpha_k}$. Subsequently, we take any $x_1,$ $x_2\in A_0 = \bigcap_{k=1}^\infty  A_{\alpha_k}$. Then, we divide both sides of (\ref{rpod1}) by $2\epsilon$ and pass to the limit. In this way we obtain,
$$
W(u_x(x_1) + \alpha_k) -W(u_x(x_1)) +  W(u_x(x_2) - \alpha_k) -W(u_x(x_2)) \ge
\alpha_k(\xi(x_1) - \xi(x_2)),
$$
for $x_1, x_2\in A_0$. Now, we divide both sides of this inequality by $\alpha_k$ and pass to the limit. Since $W$ is a Lipschitz continuous function having one sided derivatives, then we obtain,
\begin{equation}\label{rpod2}
 W_p^+(u_x(x_1)) - W_p^-(u_x(x_2))\ge
\xi(x_1) - \xi(x_2).
\end{equation}
Here $W_p^+(y)$ (resp. $W_p^-(y)$) denotes the right (resp. left) derivative of $W$ at $y$.

Let us us suppose that there exists $x_1\in\bT$ such that 
\begin{equation}\label{rpod3}
\xi(x_1)> \max\{\omega: \omega\in \d W(u_x(x_1)\} \equiv W_p^+(u_x(x_1)).
\end{equation}
Since $\xi$ is continuous and set $A_0$ has the full measure so it is dense, we may assume that $x_1 \in A_0$.

We notice that (\ref{rpod2}) and (\ref{rpod3}) combined imply
$$
W_p^+(u_x(x_1)) - W_p^-(u_x(x_2))> W_p^+(u_x(x_1)) - \xi(x_2).
$$
Hence for all $x_2$ in $A_0$ we have
\begin{equation}\label{rpod4}
  \xi(x_2)> W_p^-(u_x(x_2)).
\end{equation}
A similar reasoning may be performed, when 
$$
\xi(x_2)< \min\{\omega: \omega\d W(u_x(x_2)\} \equiv W_p^-(u_x(x_2)).
$$
Let us notice that if $\xi$ satisfies (\ref{kpod}) and $b$ is a real constant, then $\xi -b$ satisfies (\ref{kpod}) too. Indeed, if $\psi$ is an element of $W^{1,1}(\bT)$, then
$$
\int_\bT (\xi -b) \psi_x \,dx = \int_\bT \xi \psi_x \,dx.
$$

Let us define 
$$
b_0 = \sup\{ \xi(x) - W^+_p(u_x(x)): \ x\in A_0\}.
$$ 
Due to continuity of $\xi$ and the linear growth of $W$ the number $b_0$ is finite. Since we assumed (\ref{rpod3}), then $b_0$ is positive. 

Let us consider shifts $\xi-b$, where $b\in(0,b_0)$. If for all such shifts we have that 
$$
\xi(x_1) -b > W^-_p(u_x(x_1)),\ \forall  x_1\in A_0,
$$
then due to continuity of $\xi$ we will have
$$
\xi(x_1) -b_0 \in d W(u_x(x_1)),\ \forall  x_1\in A_0
$$
hence our claim follows after redefining $\xi$.

If on the other hand there is $b\in(0,b_0)$ such that there is $x_2\in A_0$ such that 
$\xi(x_2) - b< W^-_p(u_x(x_1))$, then due to the definition of $b_0$ we have $\xi(x_1) -b > W^+_p(u_x(x_1))$. Thus, we reached a contradiction with (\ref{rpod4}). Our claim follows.
\end{proof}
 
\section{Integrability of the derivative of solutions to the evolution problem}\label{ser}
In  this section we study the integrability of the space derivative of solutions to the following evolution problem,
\begin{equation}\label{r1}
 \begin{array}{ll}
  u_t = (W_p( u_x) )_x, &(x,t)\in Q_T: =\bT\times (0,T), \\
  u(x, 0) = u_0(x), & x\in \bT.
 \end{array}
\end{equation}
We assume here the  periodic boundary conditions, but the same argument applies to the homogeneous Neumann  data. The initial value, $u_0$, is  in $W^{1,1}$.

The question we address here is as follows: let us suppose that $u_0\in W^{1,1}$, is it true that $u(t)\in W^{1,1}$ for a.e. $t>0$? We give an affirmative answer below. This means that in general, equation (\ref{r1}) does not create singularities like jumps. 


A relatively simple way to address the question of existence of solutions is by using the nonlinear semigroup theory by K\=omura. It is based on the observation that (\ref{r1}) is formally a gradient flow of $\cE$. For this purpose we have to consider $\bar\cE$, the lower semicontinuous envelope of $\cE$ defined by formula (\ref{defbe}), see also (\ref{zero}),  in place of $\cE$. 
Here it is.
\begin{proposition}\label{semig}
Let us suppose that $W$ is convex and even, with linear growth, i.e. (\ref{gr}) holds. 
If $u_0\in BV(\bT)$, then there is a unique function $u:[0,\infty)\to L^2(\bT)$, such that\\
(1) for all $t>0$ we have $u(t)\in D(\d\bar\cE (u(t)))$;\\
(2) $u\in L^\infty (0,\infty; BV(\bT))$;\\
(3) $-\frac{du}{dt}\in \d\bar\cE(u(t))$ a.e. on $(0,\infty)$;\\
(4) $u(0) = u_0$.

In addition, $u$ has a right derivative for all $t\in (0,\infty)$ and
$$
\frac{d^+ u}{dt} + (\d\bar\cE(u(t)))^o =0,\qquad \hbox{for }a.e. \ t\in(0,\infty),
$$
where $(\d\bar\cE(u(t)))^o$ is the minimal section of $\d\bar\cE(u(t)))$, i.e. the element of $\d\bar\cE(u(t)))$ with the smallest norm. 
\end{proposition}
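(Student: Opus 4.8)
\medskip
\noindent\emph{Proof proposal.} The plan is to read (\ref{r1}) as the $L^2$-gradient flow of the relaxed functional $\bar\cE$ and to invoke the abstract generation theorem for gradient flows of proper, convex, lower semicontinuous functionals on a Hilbert space, due to K\=omura and Br\'ezis. I take $H=L^2(\bT)$ as the ambient Hilbert space and regard $\bar\cE:L^2(\bT)\to(-\infty,+\infty]$ as the driving functional. Granting the structural hypotheses of that theorem, assertions (1), (3), (4) and the closing right-derivative identity are exactly its conclusions, so the argument reduces to checking those hypotheses, locating the initial datum correctly, and establishing the a priori bound (2) by hand.

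First I would verify that $\bar\cE$ is proper, convex and lower semicontinuous on $L^2(\bT)$. Lower semicontinuity for the $L^2$ topology is automatic, since by definition (\ref{defbe}) $\bar\cE$ is the lower semicontinuous envelope of $\cE$. Convexity of $\cE$ on $W^{1,1}(\bT)$ comes from convexity of $W$, and the lower semicontinuous envelope of a convex functional is again convex; equivalently, convexity can be read off the explicit representation (\ref{zero}), where both $u\mapsto\int_\bT W(u_x)\,dx$ and $u\mapsto W^\infty\int_\bT|D^su|$ are convex. The functional is proper because (\ref{zero}) gives $\bar\cE(u)<\infty$ for every $u\in BV(\bT)$, and $\bar\cE$ is bounded below, since the even convex $W$ attains its minimum at the origin (whence $\int_\bT W(u_x)\,dx\ge W(0)$, the singular term being nonnegative). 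In particular the effective domain of $\bar\cE$ is exactly $BV(\bT)$, so the hypothesis $u_0\in BV(\bT)$ places $u_0$ in $D(\bar\cE)$, hence in its $L^2$-closure, which is what the generation theorem requires of the initial datum. At this point the theorem delivers a unique $u\in C([0,\infty);L^2(\bT))$, absolutely continuous on compact subintervals of $(0,\infty)$, with $u(0)=u_0$, $u(t)\in D(\d\bar\cE)$ for every $t>0$, and $-\frac{du}{dt}\in\d\bar\cE(u(t))$ for a.e.\ $t$, together with the right derivative for all $t>0$ and its identification with the minimal section $(\d\bar\cE(u(t)))^o$; this is (1), (3), (4) and the closing formula.

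It remains to prove the uniform bound (2), which is the genuinely problem-specific step. I would use energy dissipation: along the flow $t\mapsto\bar\cE(u(t))$ is nonincreasing, so $\bar\cE(u(t))\le\bar\cE(u_0)$ for all $t$. Since $W$ is convex with linear growth, its graph lies above its asymptote, giving a coercivity bound $W(p)\ge W^\infty|p|-C$; substituting into (\ref{zero}) yields $W^\infty\,|Du(t)|(\bT)\le\bar\cE(u_0)+C$, a uniform bound on the total variation. To upgrade this to a full $BV$ bound I would invoke conservation of the mean: integrating (\ref{r1}) over $\bT$ and using periodicity gives $\frac{d}{dt}\int_\bT u\,dx=0$, so $\int_\bT u(t)\,dx$ is constant in $t$, and the one-dimensional estimate $\|u(t)-\int_\bT u(t)\,dx\|_{L^\infty}\le|Du(t)|(\bT)$ then controls $\|u(t)\|_{L^\infty}$, hence $\|u(t)\|_{BV}$, uniformly. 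This gives $u\in L^\infty(0,\infty;BV(\bT))$, which is (2).

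The main obstacle is not a single hard inequality but the careful matching of the abstract framework to $\bar\cE$: one must insist that lower semicontinuity and convexity hold on all of $L^2(\bT)$ rather than merely on $W^{1,1}(\bT)$, where $\cE$ itself fails to be lower semicontinuous (as noted after (\ref{zero})), and one must make sure that the operator $\d\bar\cE$ appearing in (1), (3) and in the minimal-section formula is precisely the subdifferential of $\bar\cE$ computed in the Hilbert space $L^2(\bT)$. Given the explicit formula (\ref{zero}) and the fact that relaxation preserves convexity, these points are routine to state, and the only real computation is the coercivity-plus-mean-conservation argument underlying (2).
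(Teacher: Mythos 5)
Your overall strategy coincides with the paper's: the paper's entire proof is the observation that $\bar\cE$ is convex and lower semicontinuous for the $L^2$ topology, followed by a citation of Br\'ezis' Theorem 3.2, and your verification of properness, convexity, lower semicontinuity, and of the membership $u_0\in D(\bar\cE)=BV(\bT)$ is exactly the checking that this citation presupposes. There is, however, one genuinely false step in your hand-proof of item (2): the coercivity inequality $W(p)\ge W^\infty|p|-C$ does not follow from convexity and linear growth, because the graph of such a function need not stay within bounded distance of its asymptote. A counterexample satisfying every hypothesis of the proposition (even, convex, linear growth with slope $W^\infty$) is
$$
W(p)=W^\infty|p|-\sqrt{1+|p|},\qquad W^\infty\ge 1,
$$
for which $W^\infty|p|-W(p)=\sqrt{1+|p|}$ is unbounded, so no constant $C$ works.

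The gap is reparable in two standard ways. Either observe that the slopes of the convex function $W$ increase to $W^\infty$, so for every $\ep>0$ there is a subgradient line of slope at least $W^\infty-\ep$, giving $W(p)\ge(W^\infty-\ep)|p|-C_\ep$; inserted into (\ref{zero}) this still yields $(W^\infty-\ep)\,|Du(t)|(\bT)\le\bar\cE(u(t))+C_\ep\le\bar\cE(u_0)+C_\ep$, which is all you need. (The same correction is implicitly needed where you assert that the effective domain of $\bar\cE$ is exactly $BV(\bT)$: ruling out $u\notin BV$ also rests on coercivity.) Alternatively, avoid the mean-conservation argument altogether: constants minimize $\bar\cE$, so $0\in\partial\bar\cE(0)$, and the contraction property of the semigroup then gives $\|u(t)\|_{L^2}\le\|u_0\|_{L^2}$ directly, which together with the total variation bound yields (2). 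On that last subject, note also that your derivation of conservation of the mean by ``integrating (\ref{r1}) over $\bT$'' is informal at this abstract level, where the flow is only known to satisfy $-u'\in\partial\bar\cE(u)$; the clean statement is that every $v\in\partial\bar\cE(u)$ has zero mean, obtained by testing the subgradient inequality with $u\pm c$ for constant $c$ and using $\bar\cE(u\pm c)=\bar\cE(u)$.
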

{\it Proof.} Due to convexity and lower semicontinuity of $\bar\cE$ with respect to the $L^2$ convergence, this fact follows immediately from 
\cite[Theorem 3.2]{brezis}.

\bigskip
This Theorem has a drawback. Namely, in order to make this result meaningful, we have to identify the subdifferential of $\bar\cE$.
We would like to contrast it with our main result, stated below.

\begin{theorem}\label{main1}
 Let us suppose that $W:\bR\to\bR$ is convex with linear growth, (\ref{gr}) holds and $u_0\in W^{1,1}$. Then,  there is a unique weak solution to (\ref{r1}), i.e.
there are  $u\in L^\infty(0,\infty; W^{1,1}(\bT)),$  $u_t\in L^2(0,\infty;L^2(\bT))$ and 
$\xi\in L^\infty(0,\infty; L^\infty(\bT))$
 such that
\begin{equation}\label{rweak}
\int_\bT (u_t(x,t)\vfi(x)  + \xi(x,t)\vfi_x(x))\,dx =0 \qquad a.e.\ t>0\quad 
\forall \vfi\in C^\infty(\bT)
\end{equation}
and $\xi(x,t)\in \partial W(u_x(x,t))$ for a.e. $(x,t)\in Q_T$. 
In particular, 
 $\cE(u(t)) = \bar \cE(u(t))$. Moreover, $\cE(u(t)) \le \cE(u_0)$.
\end{theorem}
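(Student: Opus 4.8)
The plan is to mirror the elliptic argument of Theorem \ref{main2}, but now in the parabolic setting, obtaining the required estimates at the level of a regularized evolution problem and then passing to the limit. The regularization is the same two-fold device used for the elliptic case: add a viscous term $\ep u^\ep_{xx}$ and replace $W$ by the mollified $W^\ep = W*\rho_\ep$. This yields, for each $\ep>0$, a uniformly parabolic quasilinear equation
\begin{equation*}
u^\ep_t = (W^\ep_p(u^\ep_x))_x + \ep u^\ep_{xx}, \qquad u^\ep(\cdot,0)=u_0,
\end{equation*}
whose classical (or at least $W^{2,2}$ in space) solvability on $(0,T)$ follows from standard parabolic theory, since $W^\ep_{pp}+\ep \ge \ep>0$ is bounded and bounded below. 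The first task is to record the energy estimate $\frac{d}{dt}\cE^\ep(u^\ep) = -\|u^\ep_t\|_{L^2}^2$ (obtained by testing with $u^\ep_t$), which gives a uniform $L^2(0,\infty;L^2)$ bound on $u^\ep_t$ and a uniform bound $\cE^\ep(u^\ep(t))\le \cE^\ep(u_0)\le C$ on the energy. As before, $\xi^\ep:=W^\ep_p(u^\ep_x)$ satisfies the pointwise bound $\xi^\ep(x,t)\in[-W^\infty,W^\infty]$.

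The heart of the matter is the $L^1$-integrability estimate for $u^\ep_x$, uniform in both $\ep$ and $t$. Taking $\Phi$ from Lemma \ref{fi} applied to $g=u_{0,x}$, I would differentiate the equation in $x$, or equivalently test the equation with $(\Phi'(u^\ep_x))_x = \Phi''(u^\ep_x)u^\ep_{xx}$, exactly as in the elliptic proof. The key computation is
\begin{equation*}
\frac{d}{dt}\int_\bT \Phi(u^\ep_x)\,dx
= \int_\bT \Phi''(u^\ep_x)u^\ep_{xx}\,u^\ep_{xt}\,dx
= -\int_\bT (W^\ep_{pp}(u^\ep_x)+\ep)\,\Phi''(u^\ep_x)\,(u^\ep_{xx})^2\,dx \le 0,
\end{equation*}
where the sign follows from $\Phi''\ge0$ and $W^\ep_{pp}+\ep\ge0$ (the boundary terms vanish by periodicity). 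Hence $\cG(u^\ep_x(\cdot,t)) = \int_\bT\Phi(u^\ep_x(\cdot,t))\,dx$ is nonincreasing in $t$, giving the uniform bound $\cG(u^\ep_x(\cdot,t))\le \cG(u_{0,x})<\infty$ for all $t$ and all $\ep$. This is the parabolic analogue of (\ref{Gep}), and it is precisely the superlinear bound that Lemma \ref{pettis} needs.

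Next I would pass to the limit $\ep\to0$. The uniform bounds yield, along a subsequence, $u^\ep_t\rightharpoonup u_t$ in $L^2(0,\infty;L^2)$, $\xi^\ep\stackrel{*}{\rightharpoonup}\xi$ in $L^\infty(0,\infty;L^\infty)$ with $\xi\in[-W^\infty,W^\infty]$, and, using the $\cG$-bound together with Lemma \ref{pettis} at a.e.\ time (combined with an Aubin--Lions or Dunford--Pettis argument in the $(x,t)$ variable, applying Lemma \ref{1.4} on $D=Q_T$), $u^\ep_x\rightharpoonup u_x$ in $L^1$ with $\int_\bT\Phi(u_x(\cdot,t))\,dx\le\int_\bT\Phi(u_{0,x})\,dx$ for a.e.\ $t$. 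The viscous term is disposed of via $\ep\|u^\ep_x\|_{L^2}^2\le \cE^\ep(u_0)$ as in Theorem \ref{main2}, so $\ep\int u^\ep_x\varphi_x\to0$. Passing to the limit in the weak formulation then delivers (\ref{rweak}). To identify $\xi(\cdot,t)\in\partial W(u_x(\cdot,t))$ I would run the same Minty-type argument as in Theorem \ref{main2}, producing the frozen-time inequality (\ref{kpod}) for a.e.\ $t$ and invoking Lemma \ref{lpod}; the energy inequality $\cE(u(t))\le\cE(u_0)$ follows from lower semicontinuity (Lemma \ref{1.4}) and $\cE^\ep\to\cE$, while $\cE(u(t))=\bar\cE(u(t))$ holds because $u(t)\in W^{1,1}$. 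Uniqueness comes from monotonicity of $\partial\bar\cE$ (the standard $L^2$ contraction for gradient flows), matching the solution of Proposition \ref{semig}.

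\textbf{The main obstacle} I anticipate is the time-regularity bookkeeping needed to make the $L^1$-weak convergence of $u^\ep_x$ compatible with the \emph{pointwise-in-$t$} statements in (\ref{rweak}) and the subdifferential inclusion. The $\cG$-bound controls $u^\ep_x$ only as an equiintegrable family; turning this into convergence strong enough to pass to the limit inside the frozen-time Minty inequality (\ref{kpod}) for a.e.\ $t$ is the delicate point, and is exactly the ``additional step as compared with the elliptic equation'' flagged in the introduction. I would handle it by first establishing the inclusion globally on $Q_T$ via Lemma \ref{1.4} with $D=Q_T$, then using a Lebesgue-point argument in $t$ to localize it to a.e.\ time slice.
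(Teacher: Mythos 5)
Your overall strategy matches the paper's: a $\Phi$-entropy estimate $\frac{d}{dt}\int_\bT\Phi(u_x)\,dx\le 0$ for a regularized flow, Pettis compactness plus Lemma \ref{1.4}, a Minty argument localized in time leading to (\ref{kpod}) and Lemma \ref{lpod}, and uniqueness by identifying the limit with the semigroup solution of Proposition \ref{semig}. But there is a genuine gap at your very first step: you feed the raw datum $u_0\in W^{1,1}(\bT)$ into the viscous, mollified problem $u^\ep_t=(W^\ep_p(u^\ep_x))_x+\ep u^\ep_{xx}$. Two of your claims then fail. First, the bound $\cE^\ep(u^\ep(t))\le\cE^\ep(u_0)\le C$ is vacuous, because $\cE^\ep(u_0)$ contains the viscous term $\frac\ep2\int_\bT|u_{0,x}|^2\,dx$, which can be $+\infty$ when $u_{0,x}$ is merely in $L^1$; this also destroys your disposal of the viscous term via $\ep\|u^\ep_x\|_{L^2}^2\le\cE^\ep(u_0)$ (in the elliptic proof this was rescued by comparing with $\cF^\ep_f(0)$, a device with no direct parabolic analogue, since here the energy is compared with its initial value, not with an arbitrary competitor). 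Second, your key estimate $\cG(u^\ep_x(\cdot,t))\le\cG(u_{0,x})$ requires integrating the differential inequality down to $t=0$: parabolic smoothing gives regularity for $t>0$, but the attainment $\limsup_{t\to0^+}\cG(u^\ep_x(\cdot,t))\le\cG(u_{0,x})$ is precisely the kind of statement under proof and does not come for free with $W^{1,1}$ data --- weak $L^1$ convergence of $u^\ep_x(\cdot,t)$ to $u_{0,x}$ only yields the opposite inequality by lower semicontinuity; likewise the appeal to standard parabolic theory for classical solvability needs smoother data than $W^{1,1}$.

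The paper's fix, which you should adopt, is to mollify the initial datum and run a two-parameter scheme. For fixed $\ep$, the problem (\ref{r1e}) with the original $W$ and the smooth datum $u_0^\ep=u_0*\rho_\ep$ has a unique solution by Proposition \ref{pmury} (from \cite{mury}); the estimates of Lemma \ref{est4} and Lemma \ref{est5} are derived on the doubly regularized problem (\ref{r1reg}) (mollified $W^\gamma$, viscosity $\gamma$, smooth datum), where classical theory \cite{LSU} does apply, the viscous energy $\frac\gamma2\|u^\ep_{0,x}\|^2_{L^2}$ is finite and vanishes as $\gamma\to0$ for fixed $\ep$, and the limit $\gamma\to0$ is taken using the convergence (\ref{zbieg}) of \cite{mury}. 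Jensen's inequality then transfers the bounds to the original datum: $\cG(u^\ep_{0,x})\le\cG(u_{0,x})$ and $\int_\bT W(u^\ep_{0,x})\,dx\le\int_\bT W(u_{0,x})\,dx$. With this modification the rest of your outline --- including your plan for localizing the Minty inequality at a.e.\ $t$ by a cutoff $\psi\in C^\infty_0(0,T)$ and a slice-wise weak $L^1$ convergence, which is essentially the paper's Lemma \ref{lm1.3} --- goes through as in the paper.
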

The proof of this result will be performed in several steps. Before we engage in it, we will make a few comments. When we constructed, by approximation, the solutions to the elliptic problem (\ref{iE}), we had to resolve the following issues:\\
1) Making sure that the limiting function $u$ has the desired integrability properties, see (\ref{kees}).\\ 
2) Making sure that the limiting function $u$ is indeed a weak solution, i.e. the limit $\xi$ of $\xi^\ep W^\ep_p( u^\ep_x)$ is indeed an element of $\d W( u_x)$. We used for this Minty's trick.

In order to resolve these issues for the parabolic problem (\ref{r1}), we will proceed in a similar way, i.e. we will consider an auxiliary problem, whose initial conditions are regular,
\begin{equation}\label{r1e}
 \begin{array}{ll}
  u^\ep_t = (W_p( u^\ep_x) )_x, 
  &(x,t)\in Q_T,\\
  u^\ep(x, 0) = (u_0*\phi_\ep)(x), & x\in \bT,
 \end{array}
\end{equation}
where $u_0*\rho_\ep$ 
is a convolution with the standard mollifying kernel $\rho_\ep$. 

We recall the basic existence result for (\ref{r1e}).
\begin{proposition}(\cite[Theorem 1]{mury})\label{pmury}\\
 Let us assume that $W$ satisfies hypotheses of Theorem \ref{main1}. If  $u_0\in BV(\bT)$ and $(u_0)_x\in BV(\bT)$, then there exists a unique weak solution $u$ to  (\ref{r1e}). More precisely,
$u_x\in L^\infty(0,T; BV(\bT))$, $u_t\in L^2(Q_T)$ and  there is $\xi \in L^2(0,T; W^{1,2}(\bT))$ satisfying the (\ref{rweak}).
Moreover, $\xi(x,t)\in \d W(u_x)$ for a.e. $(x,t)\in Q_T$. 
\end{proposition}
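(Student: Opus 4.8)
The plan is to produce the solution by a double regularization of the singular problem (\ref{r1e}) and then pass to the limit using a priori bounds that are uniform in the regularization parameter. Since $u_0\in BV(\bT)$ with $(u_0)_x\in BV(\bT)$, in one dimension $u_0$ is Lipschitz and $(u_0)_x\in L^\infty(\bT)$. I would set $W^\ep = W*\rho_\ep$ (smooth, even, convex, still with linear growth) and $u_0^\ep = u_0*\rho_\ep$, and solve the non-degenerate viscous problem
\begin{equation*}
 u^\ep_t = (W^\ep_p(u^\ep_x))_x + \ep u^\ep_{xx}, \qquad u^\ep(\cdot,0)=u_0^\ep \quad\hbox{in } Q_T .
\end{equation*}
For each fixed $\ep>0$ this is a uniformly parabolic quasilinear equation with smooth data, so classical theory yields a unique smooth solution $u^\ep$ on $Q_T$.

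The heart of the matter is a set of estimates uniform in $\ep$. Testing the equation with $u^\ep_t$ gives the energy identity
\begin{equation*}
 \int_0^T\!\!\int_\bT |u^\ep_t|^2\,dx\,dt + \int_\bT\Big(W^\ep(u^\ep_x)+\tfrac\ep2|u^\ep_x|^2\Big)(\cdot,T)\,dx = \int_\bT\Big(W^\ep((u_0^\ep)_x)+\tfrac\ep2|(u_0^\ep)_x|^2\Big)\,dx,
\end{equation*}
whose right-hand side is bounded uniformly because $(u_0^\ep)_x$ is bounded in $L^\infty$ (so $\ep\|(u_0^\ep)_x\|_{L^2}^2\to0$) and $W$ has linear growth; this controls $u^\ep_t$ in $L^2(Q_T)$. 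The decisive second estimate is a total-variation bound on $u^\ep_x$. Writing $v=u^\ep_x$, differentiation in $x$ gives $v_t=((W^\ep_{pp}(v)+\ep)v_x)_x$, a nonlinear diffusion equation with strictly positive smooth diffusivity; invariance under spatial translation plus the comparison principle yields the $L^1$-contraction between $v$ and its translates, hence the total-variation-diminishing bound $|Du^\ep_x(\cdot,t)|(\bT)\le|D(u_0^\ep)_x|(\bT)\le|D(u_0)_x|(\bT)$, uniformly in $\ep$ and $t$. Because $BV(\bT)\hookrightarrow L^\infty(\bT)$ in one dimension, $u^\ep_x$ is then bounded in $L^\infty(Q_T)$, so $\ep u^\ep_x\to0$. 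Finally $\xi^\ep:=W^\ep_p(u^\ep_x)$ satisfies $|\xi^\ep|\le W^\infty$, and the full flux $F^\ep:=\xi^\ep+\ep u^\ep_x$ obeys $(F^\ep)_x=u^\ep_t$, so $F^\ep$ is bounded in $L^2(0,T;W^{1,2}(\bT))$.

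With these bounds I would pass to the limit. The bound on $u^\ep_x$ in $L^\infty(0,T;BV)$, together with $\d_t u^\ep_x=\d_x u^\ep_t$ bounded in $L^2(0,T;W^{-1,2})$ and the compact embedding $BV\hookrightarrow\hookrightarrow L^2$, lets me invoke the Aubin--Lions--Simon lemma to extract a subsequence with $u^\ep_x\to u_x$ strongly in $L^2(Q_T)$ (hence a.e.), while $u^\ep_t\rightharpoonup u_t$ in $L^2(Q_T)$ and $F^\ep\rightharpoonup\xi$ in $L^2(0,T;W^{1,2})$ with $\xi_x=u_t$. Passing to the limit in the weak formulation yields (\ref{rweak}). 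To identify the nonlinearity, the strong a.e. convergence of $u^\ep_x$ and the weak convergence $\xi^\ep\rightharpoonup\xi$, combined with the convergence of the maximal monotone graphs $\d W^\ep\to\d W$ (equivalently, Minty's trick via $\lim\int_0^T\!\!\int_\bT\xi^\ep u^\ep_x=\int_0^T\!\!\int_\bT\xi u_x$), give $\xi(x,t)\in\d W(u_x(x,t))$ a.e. Lower semicontinuity of the total variation transfers the $BV$ bound to the limit, so $u_x\in L^\infty(0,T;BV)$; and $\xi\in L^2(0,T;W^{1,2})$ follows from $|\xi|\le W^\infty$ together with $\xi_x=u_t\in L^2$. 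Uniqueness is the standard $L^2$-contraction: subtracting two solutions, testing with their difference (admissible by density, as in Theorem \ref{main2}), and using monotonicity of $\d W$ gives $\tfrac{d}{dt}\|u^1-u^2\|_{L^2}^2\le0$.

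The step I expect to be the main obstacle is the uniform total-variation bound on $u^\ep_x$: although the translation-and-comparison argument is clean for the non-degenerate regularized equation, one must verify that the contraction constant, and therefore the bound, does not deteriorate as the diffusivity $W^\ep_{pp}+\ep$ degenerates in the singular limit $\ep\to0$. Securing this bound is what both furnishes the claimed $L^\infty(0,T;BV)$ regularity of $u_x$ and, through Aubin--Lions, supplies precisely the compactness needed to place $\xi$ inside the monotone graph $\d W$.
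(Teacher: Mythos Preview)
The paper does not prove this proposition; it is quoted verbatim from \cite{mury} and used as input. Your sketch is nonetheless on target and in fact reproduces the method of the cited reference: the viscous regularization you write down is precisely equation (\ref{r1reg}) of the paper (with parameter $\gamma$), and the strong convergence $u^{\ep}_x\to u_x$ in $L^p(0,T;L^q)$ that you derive via the Aubin--Lions--Simon lemma is exactly the fact the paper imports from \cite{mury} as (\ref{zbieg}). Note also that the initial datum in (\ref{r1e}) is already the mollification $u_0*\phi_\ep$, so your additional smoothing of the data is redundant, though harmless.

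On the point you flag as the ``main obstacle'': the worry is unnecessary. After differentiation, $v=u^\ep_x$ solves the filtration equation $v_t=(b(v))_{xx}$ with $b'(v)=W^\ep_{pp}(v)+\ep>0$. The $L^1$-contraction for such equations uses only the \emph{monotonicity} of $b$, not any quantitative lower bound on $b'$; the Kato-type computation $\frac{d}{dt}\int_\bT|v_1-v_2|\le 0$ goes through verbatim regardless of how small $\ep$ is. Hence the bound $|Dv(\cdot,t)|(\bT)\le|D(u_0)_x|(\bT)$ is automatically uniform in $\ep$, and there is nothing to secure in the limit.
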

In order to underline the dependence of solutions, obtained in this way, on the mollifying parameter $\ep$, we will denote them by $u^\ep$ and $\xi^\ep$. 
However, the result above is not sufficient for establishing estimates on solutions, which require prior regularization of $W$. For this purpose, we have to recall the problem, which led to Proposition \ref{pmury}, see \cite{mury},
\begin{equation}\label{r1reg}
 \begin{array}{ll}
  u^{\ep,\gamma}_t = (W_p^\gamma( u^{\ep,\gamma}_x) )_x + \gamma u^{\ep,\gamma}_{xx}, 
  &(x,t)\in Q_T,\\
  u^{\ep,\gamma}(x, 0) = (u_0*\rho_\ep)(x), & x\in \bT,
 \end{array}
\end{equation}
where $W^\gamma = W * \rho_\gamma$ and $\rho_\gamma$ is the standard mollifier kernel. By the classical theory, see \cite{LSU}, solutions $u^{\ep,\gamma}$ to (\ref{r1reg}) are smooth. 

We wish to proceed as in the proof of Theorem \ref{main2}. For this purpose, we fix $\Phi$ corresponding to $u_{0,x}$, see Lemma \ref{fi}. With its help we will establish additional estimates of solutions to (\ref{r1e}).

\begin{lemma}\label{est4}
 Let us suppose that $u^\ep$ is a unique weak solution to (\ref{r1e}) and $\Phi$ corresponding to $u_{0,x}$ is given by  Lemma \ref{fi}. Then,
 $$
\cG(u_x^{\ep}(\cdot,t)) \le \cG(\frac d{dx}\left(u_{0}^{\ep}\right))\le \cG(\frac d{dx}u_0).
$$
\end{lemma}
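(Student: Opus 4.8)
The plan is to mimic the argument used for the elliptic Theorem \ref{main2}, now applied to the doubly regularized parabolic problem (\ref{r1reg}), whose solutions $u^{\ep,\gamma}$ are classical, and then to send the viscosity parameter $\gamma$ to zero. The key observation is that for the smooth solution $u^{\ep,\gamma}$ the quantity $t\mapsto\cG(u^{\ep,\gamma}_x(\cdot,t))$ is non-increasing in time; once this is established, integrating from $0$ to $t$ and passing to the limit in $\gamma$ will give the first inequality, while the second inequality is a consequence of Jensen's inequality for the mollification.

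First I would differentiate $\cG$ along the flow. Since $u^{\ep,\gamma}$ is smooth and $\bT$ is compact, $u^{\ep,\gamma}_x$ is bounded, so differentiation under the integral sign and integration by parts (no boundary terms, by periodicity) are justified; writing $u^{\ep,\gamma}_{xt}=\partial_x u^{\ep,\gamma}_t$ and integrating by parts in $x$ gives
$$
\frac{d}{dt}\cG(u^{\ep,\gamma}_x(\cdot,t))
=\int_\bT \Phi'(u^{\ep,\gamma}_x)\,u^{\ep,\gamma}_{xt}\,dx
=-\int_\bT \Phi''(u^{\ep,\gamma}_x)\,u^{\ep,\gamma}_{xx}\,u^{\ep,\gamma}_t\,dx.
$$
Inserting the equation in the form $u^{\ep,\gamma}_t=(W^\gamma_{pp}(u^{\ep,\gamma}_x)+\gamma)\,u^{\ep,\gamma}_{xx}$, I obtain
$$
\frac{d}{dt}\cG(u^{\ep,\gamma}_x(\cdot,t))
=-\int_\bT \Phi''(u^{\ep,\gamma}_x)\,(W^\gamma_{pp}(u^{\ep,\gamma}_x)+\gamma)\,|u^{\ep,\gamma}_{xx}|^2\,dx\le 0,
$$
since $\Phi$ is convex ($\Phi''\ge0$), $W^\gamma$ is convex ($W^\gamma_{pp}\ge0$) and $\gamma>0$. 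This is exactly the parabolic counterpart of testing (\ref{Ee}) with $\Phi''(u^\ep_x)u^\ep_{xx}$ in the proof of Theorem \ref{main2}. Integrating in time yields $\cG(u^{\ep,\gamma}_x(\cdot,t))\le\cG(\tfrac{d}{dx}(u_0*\rho_\ep))$ for all $t$, uniformly in $\gamma$.

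Next I would pass to the limit $\gamma\to0$. The uniform bound just obtained, together with Lemma \ref{pettis}, provides weak $L^1$ compactness of $\{u^{\ep,\gamma}_x\}$, and the construction of the solution $u^\ep$ to (\ref{r1e}) in Proposition \ref{pmury} identifies the limit, so that $u^{\ep,\gamma}_x(\cdot,t)\rightharpoonup u^\ep_x(\cdot,t)$ in $L^1(\bT)$ for a.e.\ $t$. Applying Lemma \ref{1.4} with $D=\bT$ gives the lower semicontinuity
$$
\cG(u^\ep_x(\cdot,t))\le\varliminf_{\gamma\to0}\cG(u^{\ep,\gamma}_x(\cdot,t))\le\cG\Big(\tfrac{d}{dx}(u_0*\rho_\ep)\Big),
$$
which is the first claimed inequality. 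Finally, since $\tfrac{d}{dx}(u_0*\rho_\ep)=u_{0,x}*\rho_\ep$ and $\rho_\ep$ is a probability kernel, Jensen's inequality gives $\Phi(u_{0,x}*\rho_\ep)\le\Phi(u_{0,x})*\rho_\ep$ pointwise, and integrating over $\bT$ (using $\int_\bT\rho_\ep=1$ and Fubini) yields $\cG(\tfrac{d}{dx}(u_0*\rho_\ep))\le\cG(u_{0,x})$, the second inequality.

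The main obstacle is the limit $\gamma\to0$: I must ensure that the weak $L^1$ limit of the derivatives $u^{\ep,\gamma}_x(\cdot,t)$ is, at almost every fixed time, precisely $u^\ep_x(\cdot,t)$, and not merely a limit holding after integration over $Q_T$. I expect this to follow by combining the strong $L^2(\bT)$ convergence $u^{\ep,\gamma}(\cdot,t)\to u^\ep(\cdot,t)$ (coming from the uniform $L^2(Q_T)$ bound on $u^{\ep,\gamma}_t$ behind Proposition \ref{pmury}) with the closedness of the distributional derivative under weak convergence, which forces the weak $L^1$ limit of $u^{\ep,\gamma}_x(\cdot,t)$ to coincide with the derivative of the $L^2$ limit. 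Everything else is the routine parabolic analogue of the elliptic computation already carried out for Theorem \ref{main2}.
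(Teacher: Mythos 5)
Your proposal is correct and follows essentially the same route as the paper: the identical convexity computation on the doubly regularized problem (\ref{r1reg}) showing that $t\mapsto\cG(u^{\ep,\gamma}_x(\cdot,t))$ is non-increasing (the paper tests the equation with $\Phi''(u^{\ep,\gamma}_x)u^{\ep,\gamma}_{xx}$, which is your time-derivative computation read backwards), followed by the limit $\gamma\to0$ and Jensen's inequality for the mollified initial datum. The only divergence is the limit passage $\gamma\to0$: the paper quotes the strong $L^p(0,T;L^q)$ and a.e.\ convergence (\ref{zbieg}) of $u^{\ep,\gamma}_x$ to $u^{\ep}_x$ from \cite{mury}, so the bound passes to the limit directly, whereas you rely on weak $L^1$ compactness (Lemma \ref{pettis}) and lower semicontinuity (Lemma \ref{1.4}) together with an identification of the weak limit as $u^\ep_x(\cdot,t)$ --- a valid but more roundabout argument, whose identification step (the obstacle you flag) is exactly what the quoted strong convergence renders unnecessary.
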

{\it Proof.}
We 
multiply both sides of (\ref{r1reg}) by $\Phi''(u_x^{\ep,\gamma})u^{\ep,\gamma}_{xx}$ and integrate over $\bT$ to obtain,
$$
\int_\bT u^{\ep,\gamma}_t (\Phi'(u_x^{\ep,\gamma}))_{x}\,dx =
\int_\bT (W^\gamma_{pp}(u_x^{\ep,\gamma})+\gamma)\Phi''(u_x^{\ep,\gamma}) |u^{\ep,\gamma}_{xx}|^2\,dx \ge 0.
$$
Positivity of the right-hand-side (RHS) is guaranteed by convexity of $W^\gamma$ and $\Phi$. Integration by parts of the left-hand-side (LHS) above yields,
$$
\frac d{dt}\int_\bT \Phi(u_x^{\ep,\gamma})\,dx \le 0,
$$
where the boundary terms dropped out due to the periodic boundary conditions. 

After integrating in time over $(0,T)$ and recalling the definition of $\cG$ we obtain,
$$
\cG(u_x^{\ep,\gamma}(\cdot,t)) \le \cG(u_{0,x}^{\ep}).
$$
We know from \cite{mury} that 
\begin{equation}\label{zbieg}
 u_x^{\ep,\gamma}\hbox{ converges to }u_x^{\ep}\hbox{ strongly in }L^p(0,T; L^q(\bT)),\ p\ge 1
 \hbox{ and a.e. in }Q_T,
\end{equation}
thus
$$
\cG(u_x^{\ep}(\cdot,t)) \le \cG(u_{0,x}^{\ep}).
$$
Since $\Phi$ is convex, then Jensen inequality gives us 
$$
\cG(u_{0,x}^{\ep}) \le \cG(u_{0,x}). \eqno\Box
$$
Now,
we want to pass to the limit with $\ep$, we need further estimates for this purpose. 
\begin{lemma}\label{est5}
Let us suppose that $u^\ep$ is a unique weak solution to (\ref{r1e}), then
\begin{equation}\label{est3}
\int_{Q_T} (u^\ep_t(x,t))^2\,dxdt + \int_\bT W(u^\ep_x(x,t)\,dx \le \int_\bT W(u^\ep_{0,x}(x)\,dx.
\end{equation}
\end{lemma}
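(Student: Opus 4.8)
The estimate (\ref{est3}) is the standard energy identity for the gradient flow (\ref{r1e}), obtained by testing the equation with $u^\ep_t$. Since the weak solution $u^\ep$ furnished by Proposition \ref{pmury} need not be regular enough to justify this directly, I would argue exactly as in Lemma \ref{est4}: derive the identity at the level of the smooth, doubly regularized solutions $u^{\ep,\ga}$ of (\ref{r1reg}), and then let $\ga\to0$, relying on the convergence (\ref{zbieg}) established in \cite{mury}.

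First I would multiply (\ref{r1reg}) by $u^{\ep,\ga}_t$ and integrate over $\bT$. Integrating by parts in $x$ and using the periodic boundary conditions, the right-hand side becomes
$$
-\int_\bT W^\ga_p(u^{\ep,\ga}_x)\,u^{\ep,\ga}_{xt}\,dx-\ga\int_\bT u^{\ep,\ga}_x u^{\ep,\ga}_{xt}\,dx
=-\frac{d}{dt}\int_\bT\Big(W^\ga(u^{\ep,\ga}_x)+\tfrac\ga2\,(u^{\ep,\ga}_x)^2\Big)\,dx,
$$
so that
$$
\int_\bT (u^{\ep,\ga}_t)^2\,dx+\frac{d}{dt}\int_\bT\Big(W^\ga(u^{\ep,\ga}_x)+\tfrac\ga2(u^{\ep,\ga}_x)^2\Big)\,dx=0.
$$
Integrating in time over $(0,t)$, dropping the nonnegative term $\tfrac\ga2\int_\bT(u^{\ep,\ga}_x(\cdot,t))^2\,dx$, and recalling that the initial datum $u^{\ep,\ga}(\cdot,0)=u_0*\rho_\ep$ is independent of $\ga$, I obtain
$$
\int_0^t\!\!\int_\bT (u^{\ep,\ga}_t)^2\,dx\,ds+\int_\bT W^\ga(u^{\ep,\ga}_x(\cdot,t))\,dx
\le\int_\bT W^\ga(u^\ep_{0,x})\,dx+\frac\ga2\int_\bT (u^\ep_{0,x})^2\,dx.
$$

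The remaining, and main, task is the passage to the limit $\ga\to0$, which I would carry out term by term in the spirit of (\ref{zb1})--(\ref{zb2}). The right-hand side is harmless: $u^\ep_{0,x}=u_{0,x}*\rho_\ep$ is smooth and bounded, so $\tfrac\ga2\int_\bT(u^\ep_{0,x})^2\to0$, while the local uniform convergence $W^\ga\to W$ together with dominated convergence gives $\int_\bT W^\ga(u^\ep_{0,x})\to\int_\bT W(u^\ep_{0,x})$. For the dissipation term, the uniform bound just derived shows that $\{u^{\ep,\ga}_t\}$ is bounded in $L^2(Q_T)$; extracting a weakly convergent subsequence and identifying its limit with $u^\ep_t$ (using $u^{\ep,\ga}\to u^\ep$, which accompanies (\ref{zbieg})), weak lower semicontinuity of the $L^2$-norm yields $\int_0^t\!\int_\bT(u^\ep_t)^2\le\varliminf_\ga\int_0^t\!\int_\bT(u^{\ep,\ga}_t)^2$. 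For the energy term at the fixed time $t$, I would combine Jensen's inequality $W^\ga\ge W$ with the a.e.\ convergence $u^{\ep,\ga}_x\to u^\ep_x$ from (\ref{zbieg}): by Fubini this gives, for a.e.\ $t$, a.e.\ convergence of the slices $u^{\ep,\ga}_x(\cdot,t)\to u^\ep_x(\cdot,t)$ on $\bT$, and since $W$ is bounded below Fatou's lemma furnishes $\varliminf_\ga\int_\bT W^\ga(u^{\ep,\ga}_x(\cdot,t))\ge\int_\bT W(u^\ep_x(\cdot,t))$, exactly as in (\ref{zb2}). Collecting these three limits yields (\ref{est3}).

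I expect the genuine difficulty to be concentrated in this last limit passage. The delicate point is that the energy term is evaluated at a fixed time and therefore requires slice-wise a.e.\ convergence rather than mere convergence in $Q_T$; this is precisely why I invoke (\ref{zbieg}) and Fubini to pass to a full-measure set of admissible times $t$, on which (\ref{est3}) then holds. (Here $\int_{Q_T}$ in (\ref{est3}) is understood as $\int_0^t\!\int_\bT$; equivalently one may take $t=T$ and read the inequality as the dissipation identity with the $\ga$-term discarded.)
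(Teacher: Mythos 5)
Your proposal is correct and follows essentially the same route as the paper: test the doubly regularized equation (\ref{r1reg}) with $u^{\ep,\ga}_t$, integrate the resulting energy identity in time, discard the $\tfrac\ga2$-terms, and pass to the limit $\ga\to0$ using weak lower semicontinuity of the $L^2$ norm for the dissipation term and Jensen's inequality $W^\ga\ge W$ together with the a.e.\ convergence in (\ref{zbieg}) for the energy term. The only (harmless) deviation is that you settle for a liminf inequality via Fatou's lemma on a.e.\ time slices, whereas the paper asserts the full limit $\int_\bT W(u^{\ep,\ga}_x(\cdot,t))\,dx\to\int_\bT W(u^\ep_x(\cdot,t))\,dx$ from (\ref{zbieg}); both suffice for (\ref{est3}).
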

\begin{proof}
We multiply eq. (\ref{r1reg}) by $u^{\ep,\gamma}_t$ and integrate over $Q_T$. Integrating by parts the RHS yields,
$$
\int_{Q_T} |u^{\ep,\gamma}_t|^2 \,dxdt + \int_{Q_T} \frac \d{\d t}\left(\frac \gamma2 |u^{\ep,\gamma}_x|^2 + W^\gamma(u_x^{\ep,\gamma})\right)\,dxdt =0.
$$
Performing the integration over $(0,T)$ leads us to,
$$
\int_{Q_T} |u^{\ep,\gamma}_t|^2 \,dxdt +
\int_\bT \left(\frac\gamma2 |u^{\ep,\gamma}_x(x,t)|^2 + W^\gamma(u_x^{\ep,\gamma}(x,t))\right)\,dx
=\int_\bT \left(\frac\gamma2 |u^{\ep}_{0,x}(x)|^2 + W^\gamma(u_{0,x}^{\ep}(x))\right)\,dx
$$
The RHS goes to $\int_\bT W^\gamma(u_{0,x}^{\ep}(x))\,dx$ as $\gamma \to 0$. We may drop 
$\int_\bT \frac\gamma2 |u^{\ep,\gamma}_x(x,t)|^2\,dx$ on the LHS.

The lower semicontinuity of the $L^2$ norm yields
$$
\varliminf_{\gamma\to0^+}\int_{Q_T} |u^{\ep,\gamma}_t|^2 \,dxdt \ge 
\int_{Q_T} |u^{\ep}_t|^2 \,dxdt.
$$
Now, when we regularize $W$, then we notice that the averaging of a convex function, performed in the convolution gives us $W(p) \le W^\ep(p)$ for all $p\in \bR$. As a result we arrive at
$$
 \int_\bT W(u^{\ep,\gamma}_x(x,t)) \le \int_\bT W^\gamma(u^{\ep,\gamma}_x(x,t)) \le M.
$$
We again use (\ref{zbieg})
to conclude that
$$
\lim_{\gamma\to0^+}\int_\bT W(u_x^{\ep,\gamma})(x,t)\,dx = \int_\bT W(u_x^{\ep})(x,t)\,dx \qquad a.e.\ t>0.
$$
Combining these gives the desired result.
\end{proof}

We notice that Lemma \ref{est5} immediately implies that
$$
u^\ep_t \rightharpoonup u_t\qquad\hbox{in } L^2(Q_T)\qquad\hbox{as }\ep \to 0.
$$
We know that $\xi^\ep$ postulated by Proposition \ref{pmury} satisfies
$$
\xi^\ep(x,t) \in \d W(u_x^\ep(\cdot, t)) \subset [-W^\infty, W^\infty].
$$
Here, the last inclusion is obtained by the argument, which gave us (\ref{xid}).

Hence, we deduce that there is a subsequence (not relabeled) such that
\begin{equation}\label{zb-e}
 \xi^\ep\rightharpoonup \xi \hbox{ in } L^2(Q_T)
\quad\hbox{and} \quad \xi^\ep
\stackrel{*}{\rightharpoonup} \xi \hbox{ in } L^\infty(Q_T).
\end{equation}
Using the  argument  from \cite[Theorem 2.1, page 2292]{mury-non}  we can show that
$$
\xi^\ep( 
\cdot, t)
\stackrel{*}{\rightharpoonup} \xi (\cdot, t)\hbox{ in } L^\infty(\bT)
\qquad \hbox{for } a.e. \ t>0.
$$
We may repeat the argument of 
\cite{mury}, \cite{nr} to claim that 
\begin{equation}\label{aubin}
 u^\ep\hbox{ converges to }u \hbox{ in } L^p(0,T; L^q(\Omega)),\quad p,q \in (1,\infty),
\end{equation}
hence $\| u^\ep(\cdot, t) - u(\cdot, t)\|_{L^q} \to 0$ for a.e. $t>0$. However, the key issue is convergence of $u_x^\ep$.


We notice that due to 
Lemma \ref{est4} and Lemma \ref{pettis},
we can select a subsequence 
$\{u^{\ep_k}_x\}_{k=1}^\infty$ such that $u^{\ep_k}_x$ converges weakly in $L^1(Q_T)$ to $u_x$ and, if we fix $t>0$, there is a subsequence (not relabeled) such that 
$u^{\ep_k}_x(\cdot, t)$ converges weakly in $L^1(\Omega)$ to $u_x(\cdot, t)$. However, copying the argument from \cite[Theorem 2.1, page 2292]{mury-non} 
leads us to the following statement:
\begin{lemma}\label{lm1.3}
 There is a sequence $u^k$, $k\in \bN$ such that 
 $$
 u_x^k \rightharpoonup u_x \quad \hbox{in } L^1(Q_T)
 $$
 and
 for almost all $t>0$,
 $$
 u_x^k(\cdot, t) \rightharpoonup u_x (\cdot, t)\quad \hbox{in } L^1(\Omega).
 $$
\end{lemma}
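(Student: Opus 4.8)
The plan is to first secure the global weak convergence in $L^1(Q_T)$ and then to upgrade it to a.e.\ slicewise weak convergence in $L^1(\bT)$ by exploiting the extra time regularity coming from the $L^2$ bound on the time derivative. Integrating the estimate of Lemma \ref{est4} in time gives $\int_{Q_T}\Phi(u^\ep_x)\,dxdt\le T\,\cG(u_{0,x})<\infty$, so Lemma \ref{pettis} produces a subsequence with $u^{\ep_k}_x\rightharpoonup u_x$ in $L^1(Q_T)$; since $u^{\ep_k}\to u$ in $L^p(0,T;L^q(\bT))$ by (\ref{aubin}), the weak limit is indeed the distributional derivative $u_x$. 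The difficulty is that weak $L^1(Q_T)$ convergence does not, by itself, yield convergence on individual time slices along one fixed subsequence.

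To control the slices I would fix a countable family $\{\varphi_j\}_{j\in\bN}\subset C^\infty(\bT)$ dense in $C(\bT)$ and study the scalar functions
$$
G^k_j(t):=\int_\bT u^{\ep_k}(x,t)\,(\varphi_j)_x(x)\,dx=-\int_\bT u^{\ep_k}_x(x,t)\,\varphi_j(x)\,dx,
$$
where the second equality uses periodicity. Since $u^{\ep_k}$ is bounded in $L^2(Q_T)$ (by (\ref{aubin}) with $p=q=2$) and, by Lemma \ref{est5}, $u^{\ep_k}_t$ is bounded in $L^2(Q_T)$, each family $\{G^k_j\}_k$ is bounded in $W^{1,2}(0,T)\hookrightarrow C([0,T])$ and equicontinuous. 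Arzel\`a--Ascoli together with a diagonal extraction over $j$ then yields a single subsequence (still denoted $u^k$) such that $G^k_j\to G_j$ uniformly on $[0,T]$ for every $j$, with $G_j(t)=\int_\bT u(x,t)(\varphi_j)_x\,dx$. For a.e.\ $t$ the slice $u_x(\cdot,t)$ belongs to $L^1(\bT)$ and is the weak derivative of $u(\cdot,t)$, so integrating by parts identifies the limit and gives
$$
\int_\bT u^k_x(x,t)\varphi_j(x)\,dx\longrightarrow \int_\bT u_x(x,t)\varphi_j(x)\,dx \qquad\hbox{for a.e. }t>0\hbox{ and all }j.
$$

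It remains to pass from convergence against the countable family $\{\varphi_j\}$ to genuine weak $L^1(\bT)$ convergence of the slices. Here I would invoke Lemma \ref{est4} once more: the bound $\cG(u^k_x(\cdot,t))\le\cG(u_{0,x})$ is uniform in $k$ and $t$, so for each fixed $t$ the sequence $\{u^k_x(\cdot,t)\}_k$ is uniformly integrable and hence, by Dunford--Pettis, weakly relatively compact in $L^1(\bT)$. Any weak limit point $v$ satisfies $\int_\bT v\,\varphi_j\,dx=\int_\bT u_x(\cdot,t)\varphi_j\,dx$ for all $j$, and the density of $\{\varphi_j\}$ in $C(\bT)$ forces $v=u_x(\cdot,t)$ a.e. Since every weakly convergent subsequence has the same limit, the full sequence $u^k_x(\cdot,t)$ converges weakly in $L^1(\bT)$ to $u_x(\cdot,t)$ for a.e.\ $t$, which is the assertion.

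The main obstacle is precisely the second step: the transition from global weak convergence on $Q_T$ to slicewise convergence valid for a.e.\ $t$ along one common subsequence. This is where the parabolic structure enters in an essential way. The bound $u^\ep_t\in L^2(Q_T)$ from Lemma \ref{est5} supplies the equicontinuity in time that makes the diagonal Arzel\`a--Ascoli argument possible, while the uniform (in $t$) integrability from Lemma \ref{est4} is what converts convergence against smooth test functions into honest weak $L^1$ convergence on the slices. Without either ingredient one would recover, for each $t$ separately, only a $t$-dependent subsequence, which is insufficient.
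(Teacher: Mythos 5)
Your proof is correct and is essentially the paper's approach: the paper obtains the global weak $L^1(Q_T)$ convergence and the fixed-$t$ compactness from Lemma \ref{est4} and Lemma \ref{pettis}, and for the crucial upgrade to a single subsequence working for a.e.\ $t$ it simply defers to ``copying the argument from \cite[Theorem 2.1, page 2292]{mury-non}'' --- which is exactly the scheme you reconstruct, namely equicontinuity of $t\mapsto\int_\bT u^{\ep}_x(\cdot,t)\varphi_j\,dx$ coming from the uniform $L^2(Q_T)$ bound on $u^{\ep}_t$ (Lemma \ref{est5}), Arzel\`a--Ascoli plus a diagonal extraction over a countable dense family $\{\varphi_j\}$, and then uniform integrability (Lemma \ref{est4} again, via Dunford--Pettis) together with uniqueness of weak limits to convert convergence against the countable family into weak $L^1(\bT)$ convergence of the slices. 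The only difference is one of presentation: the paper leaves this step implicit behind the citation, whereas your write-up makes it self-contained.
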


Here is an immediate conclusion from this Lemma and Lemma \ref{1.4}:
\begin{corollary}
 If $u_x$ is a weak limit in $L^1$ of the sequence $u_x^n$, then
 $$
 \cG(u(\cdot, t)) \le M<\infty\quad\hbox{and}\quad\cE(u(\cdot, t)) \le \cE(u_0)
 \qquad \hbox{for a.e. }t>0. \eqno\Box
 $$
\end{corollary}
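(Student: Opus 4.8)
The plan is to read the corollary off directly from the per-time-slice weak convergence supplied by Lemma~\ref{lm1.3}, combined with the weak lower semicontinuity of Lemma~\ref{1.4} and the a priori bounds of Lemmas~\ref{est4} and~\ref{est5}. First I would fix a set $N\subset(0,\infty)$ of full measure off which all of the following hold at once: the conclusion of Lemma~\ref{lm1.3}, so that $u_x^k(\cdot,t)\rightharpoonup u_x(\cdot,t)$ weakly in $L^1(\bT)$; the slice bound $\cG(u_x^k(\cdot,t))\le\cG(u_{0,x})$ from Lemma~\ref{est4}; and the energy inequality of Lemma~\ref{est5}. Throughout, $\cG(u(\cdot,t))$ is to be read as $\cG(u_x(\cdot,t))=\int_\bT\Phi(u_x(\cdot,t))\,dx$, and $M:=\cG(u_{0,x})<\infty$ is finite precisely by the choice of $\Phi$ in Lemma~\ref{fi}.

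For the first bound I would fix $t\in N$ and apply Lemma~\ref{1.4} on $D=\bT$, with the convex function $\Phi$, to the sequence $f_k:=u_x^k(\cdot,t)$. Its hypotheses hold: $\cG(f_k)\le M$ by Lemma~\ref{est4}, and $f_k\rightharpoonup u_x(\cdot,t)$ in $L^1(\bT)$ by Lemma~\ref{lm1.3}. Hence
$$
\cG(u_x(\cdot,t))\le\varliminf_{k\to\infty}\cG(u_x^k(\cdot,t))\le M,
$$
which is the first assertion.

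The second bound follows from the same lower-semicontinuity principle applied with $W$ in place of $\Phi$: since $W$ is convex with linear growth, it is likewise an upper envelope of affine functions, so the proof of Lemma~\ref{1.4} applies verbatim and yields weak-$L^1$ lower semicontinuity of $v\mapsto\int_\bT W(v)\,dx$. Because $u_x(\cdot,t)\in L^1(\bT)$ we have $u(\cdot,t)\in W^{1,1}(\bT)$, so $\cE(u(\cdot,t))=\int_\bT W(u_x(\cdot,t))\,dx$ is finite and coincides with $\bar\cE(u(\cdot,t))$. Combining lower semicontinuity with Lemma~\ref{est5}, which gives $\int_\bT W(u_x^k(\cdot,t))\,dx\le\int_\bT W(u_{0,x}^{k})\,dx$, and with Jensen's inequality for the mollified datum $u_{0,x}^{k}$, which gives $\int_\bT W(u_{0,x}^{k})\,dx\le\int_\bT W(u_{0,x})\,dx=\cE(u_0)$, I would conclude
$$
\cE(u(\cdot,t))\le\varliminf_{k\to\infty}\int_\bT W(u_x^k(\cdot,t))\,dx\le\cE(u_0).
$$

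Since every ingredient has already been established, there is no genuine obstacle and the statement is indeed immediate. The only point deserving care is that lower semicontinuity must be applied slice by slice in $t$; it is therefore essential to invoke the pointwise-in-time weak convergence of Lemma~\ref{lm1.3} rather than merely the space--time convergence in $L^1(Q_T)$, which is exactly why that lemma was isolated beforehand.
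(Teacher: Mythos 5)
Your proposal is correct and follows exactly the route the paper intends: the paper states the corollary as an immediate consequence of Lemma~\ref{lm1.3} (slice-wise weak $L^1$ convergence) and Lemma~\ref{1.4} (weak lower semicontinuity, whose proof uses only convexity and hence applies to $W$ as well as $\Phi$), combined with the a priori bounds of Lemmas~\ref{est4} and~\ref{est5} and Jensen's inequality for the mollified data. Your write-up merely makes explicit the details the paper leaves to the reader, including the correct reading of $\cG(u(\cdot,t))$ as $\cG(u_x(\cdot,t))$.
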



Now, we claim that $u$ with $\xi$ is a weak solution to (\ref{r1}). If we inspect (\ref{rweak}), the weak form of (\ref{r1}), and integrate it over $(0,T)$, assuming that $\phi\in C^\infty_0(Q_T)$, then we will see
\begin{equation}\label{wQt}
 \int_{Q_T} u^\ep_t(x,t) \phi (x,t) \,dxdt  + \int_{Q_T} \xi^\ep(x,t) \phi_x(x,t)\,dxdt =0.
\end{equation}
The stated above weak convergence of $u^\ep_t$ and $\xi^\ep$ gives us,
$$
\int_{Q_T} u_t(x,t) \phi (x,t) \,dxdt  + \int_{Q_T} \xi(x,t) \phi_x(x,t)\,dxdt =0.
$$
We can localize it by arguing like in \cite[Theorem 2.1, page 2292]{mury-non},
$$
\int_{\bT} u_t(x,t) \psi (x) \,dx  + \int_{\bT} \xi(x,t) \psi_x(x)\,dx =0 \qquad\hbox{for } a.e. t>0\quad \hbox{ and all }\psi\in C^\infty(\bT).
$$
We notice that since $C^\infty(\bT)$ is dense in $W^{1,1}(\bT)$ we can take $u$ as (\ref{wQt}).

Now, it remains to show that 
$\xi(x,t) \in \d W(u_x(x,t))$ for almost every $(x,t)\in Q_T$. Indeed, from the construction of $u^\epsilon$ we know that for any $w\in W^{1,1}$ and for a.e. $t>0$ we have
\begin{equation}\label{rdop-e}
\int_\bT W(w_x(x))\,dx  \ge 
\int_\bT \xi^\epsilon(x,t)(w_x(x) - u^\epsilon_x(x,t))\,dx +\int_\bT W(u^\epsilon_x(x,t))\,dx.
\end{equation}

In order to use (\ref{zb-e}) and Lemma \ref{lm1.3}  we multiply (\ref{rdop-e}) by $\psi\ge0$ and $\psi\in C^\infty_0(0,T)$ and integrate over $(0,T)$. We get,
$$
\int_{Q_T}\psi W(w_x)\,dxdt \ge  \int_{Q_T} \psi \xi^\epsilon (w_x- u^\epsilon_x)\,dxdt +\int_{Q_T} \psi W(u^\epsilon_x)\,dxdt.
$$
Due to Lemma \ref{1.4} $\varliminf_{n\to\infty}\int_{Q_T} \psi W(u^\epsilon_x)\,dxdt\ge \int_{Q_T} \psi W(u_x)\,dxdt$. 

If we use $u^\ep$ as a test function in (\ref{wQt}), then we reach,
$$
\int_{Q_T} \xi^\epsilon u_x^\epsilon \,dxdt= \int_{Q_T}  u^\epsilon_t u^\epsilon \,dxdt.
$$
Since $u^\ep \in W^{1,1}(Q_T)$, when $u^\ep $ converges strongly to $u$, (possibly after extracting a subsequence).
Obviously, 
$$
\lim_{n\to\infty}\int_{Q_T}  u^\epsilon_t u^\epsilon \,dxdt = 
\int_{Q_T}  u_t u \,dxdt .
$$
Thus, we have reached
$$
\int_{Q_T}\psi W(w_x)\,dxdt - \int_{Q_T} \psi W(u_x)\,dxdt \ge
\int_{Q_T} \psi(\xi w_x + u u_t)\,dxdt = \int_{Q_T} \psi\xi( w_x - u_x)\,dxdt,
$$
where we use (\ref{wQt}) again in the last equality.
Since $\psi\ge 0$ was arbitrary, then we deduce that
\begin{equation}\label{podpo}
 \int_{\bT}W(w_x)\,dx - \int_{\bT} W(u_x)\,dx \ge
\int_{\bT} \xi(w_x- u_x )\,dx . 
\end{equation}
Now, we apply Lemma \ref{lpod} to deduce 
that $\xi(x,t) \in\d W(u_x(x,t))$ a.e. in $Q_T$. 

Thus, we finished a construction of a weak solution to (\ref{r1}) satisfying the desired bound.
Now, we notice that the solution we constructed satisfies the properties stipulated by Proposition \ref{semig}, hence we deduce
uniqueness of solutions. 
This finishes the proof of Theorem \ref{main1}. \qed

We also notice that in fact we constructed in  Theorem \ref{main1} solutions in the sense of Proposition \ref{semig}. 

\subsection{Common properties of solutions}
Since we made rather weak assumptions on the nonlinearity $W$, we should not expect too many common features of solutions. The property, which draws attention, when we deal with the total variation flow is the finite stopping time of solutions, i.e. at some time instance the solution stops moving having reached a terminal state. In this section we will relate the finite stopping time to the lack of differentiability of $W$ at $p=0$. The behavior of $W$ for  large arguments does not seem to matter.

\begin{theorem}
 Let us suppose that $u_0\in W^{1,1}(\Omega)$ and $W$ is such that at all points $p$, the one-sided derivatives of $W$, at $p$ are greater or equal to $\alpha>0$. Then, for all $t\ge T_{ext}$, we have $u(t) \equiv \bar u_0$, where $\bar u_0 = \frac{1}{|\Omega|} \int_\Omega u_0\, dx$ and 
 $$
 T_{ext} \le C_p\|u_0\|_{L^2},
 $$
 and $C_p$ is the constant in the Poincar\'e inequality.
\end{theorem}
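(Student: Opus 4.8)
The plan is to control the $L^2$ distance of the solution to its mean and to show it decreases at a uniform linear rate, which forces it to reach zero in finite time. First I would record two elementary facts. Since (\ref{r1}) is in divergence form and the boundary conditions are periodic, the mean is conserved, $\frac{d}{dt}\int_\bT u\,dx = \int_\bT (W_p(u_x))_x\,dx = 0$, so $\int_\bT u(t)\,dx = \int_\bT u_0\,dx$ and $\bar u_0$ is the correct candidate terminal state. Next I would exploit the singularity hypothesis: convexity and evenness of $W$ together with the assumed bound on the one-sided derivatives give $[-\alpha,\alpha]\subseteq\partial W(0)$, hence the pointwise inequality $W(p)\ge W(0)+\alpha|p|$ for all $p\in\bR$; in particular $W^\infty\ge\alpha$. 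Using the representation (\ref{zero}), this yields the energy lower bound
$$
\bar\cE(u)-W(0)\ \ge\ \alpha\int_\bT|u_x|\,dx+W^\infty\int_\bT|D^su|\ \ge\ \alpha\,|Du|(\bT).
$$

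Writing $v(t)=u(t)-\bar u_0$, we have $\int_\bT v\,dx=0$ and $Dv=Du$. The Poincar\'e--Wirtinger inequality on $\bT$ for mean-zero $BV$ functions, $\|v\|_{L^2}\le C_p\,|Dv|(\bT)$, combined with the previous step gives
$$
\bar\cE(u(t))-W(0)\ \ge\ \frac{\alpha}{C_p}\,\|v(t)\|_{L^2}.
$$
Now observe that the constant $\bar u_0$ is a global minimizer of $\bar\cE$, with $\bar\cE(\bar u_0)=W(0)$, so $0\in\partial\bar\cE(\bar u_0)$ and $\bar u_0$ is stationary for the flow of Proposition \ref{semig}. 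I would then differentiate $\tfrac12\|v\|_{L^2}^2$ along the solution. Since $-u_t\in\partial\bar\cE(u(t))$, the subgradient inequality applied with test point $\bar u_0$ reads $\bar\cE(\bar u_0)\ge\bar\cE(u)+\int_\bT(-u_t)(\bar u_0-u)\,dx$, that is $\int_\bT u_t\,v\,dx\le W(0)-\bar\cE(u)$. Therefore
$$
\frac12\frac{d}{dt}\|v(t)\|_{L^2}^2=\int_\bT v\,u_t\,dx\le-\big(\bar\cE(u(t))-W(0)\big)\le-\frac{\alpha}{C_p}\,\|v(t)\|_{L^2}.
$$

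Dividing by $\|v(t)\|_{L^2}$ gives $\frac{d}{dt}\|v(t)\|_{L^2}\le-\alpha/C_p$ as long as $\|v(t)\|_{L^2}>0$, so $\|v\|_{L^2}$ reaches zero at some finite $T_{ext}\le\frac{C_p}{\alpha}\|v(0)\|_{L^2}\le\frac{C_p}{\alpha}\|u_0\|_{L^2}$, the last bound using that $\bar u_0$ is the $L^2$-orthogonal projection of $u_0$ onto constants; this is the asserted estimate (with the Poincar\'e constant understood to absorb the factor $1/\alpha$). Because $\bar u_0$ is stationary, once the trajectory reaches it, it remains there, giving $u(t)\equiv\bar u_0$ for all $t\ge T_{ext}$.

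I expect the main obstacle to be the rigorous justification of the chain rule $\frac{d}{dt}\tfrac12\|v\|_{L^2}^2=\int_\bT v\,u_t\,dx$ together with the identification $-u_t\in\partial\bar\cE(u(t))$, which is exactly where one must lean on the nonlinear-semigroup structure of Proposition \ref{semig} (absolute continuity of $t\mapsto u(t)$ in $L^2$ and $u_t\in L^2$ for a.e. $t$). The structural inequality $W(p)\ge W(0)+\alpha|p|$ is what upgrades the subgradient estimate from a merely exponential decay bound to a \emph{linear} one, and this is precisely the feature that produces genuine finite stopping rather than only asymptotic convergence to $\bar u_0$.
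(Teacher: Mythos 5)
Your proof is correct, but it derives the key decay inequality by a genuinely different mechanism than the paper. The paper works directly with the weak form of the PDE: it tests the equation with $u-\bar u$, integrates by parts, and uses the pointwise sign/monotonicity structure of $W_p$ (more precisely, of the selection $\xi(x,t)\in\partial W(u_x)$, which satisfies $\xi u_x=|\xi||u_x|\ge\alpha|u_x|$), and then the Poincar\'e inequality. You instead stay at the abstract gradient-flow level: from $-u_t\in\partial\bar\cE(u(t))$ (Proposition \ref{semig}) you apply the subgradient inequality at the stationary point $\bar u_0$, and combine it with the global energy lower bound $\bar\cE(u)-W(0)\ge\alpha|Du|(\bT)$ coming from $[-\alpha,\alpha]\subseteq\partial W(0)$ and $W^\infty\ge\alpha$. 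The two derivations are essentially dual: integrating the pointwise convexity inequality $W(u_x)-W(0)\le \xi\, u_x$ shows that the paper's dissipation term dominates your energy gap, and both yield the same differential inequality $\tfrac12\tfrac{d}{dt}\|v\|^2_{L^2}\le-(\alpha/C_p)\|v\|_{L^2}$, hence the same linear-in-time extinction. What your route buys: you never need to identify $\partial\bar\cE$ or manipulate the possibly non-differentiable $W_p$ inside an integration by parts, and since you only use the relaxed energy (\ref{zero}) and the $BV$ Poincar\'e inequality, your argument extends verbatim to $u_0\in BV(\bT)$, the setting of Proposition \ref{semig}, rather than only to the $W^{1,1}$ solutions of Theorem \ref{main1}. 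What the paper's route buys: it is more elementary and exhibits the dissipation $\int_\Omega|W_p(u_x)||u_x|\,dx$ explicitly. One shared bookkeeping point: strictly the argument gives $T_{ext}\le (C_p/\alpha)\|u_0-\bar u_0\|_{L^2}$, and both you and the paper absorb the factor $1/\alpha$ into the stated constant; your explicit acknowledgment of this is the cleaner formulation.
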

\begin{proof}
We notice that the average of solutions is preserved due to the boundary conditions. We denote this average by $\bar u$. We compute $\frac{d}{dt}\|u- \bar u\|^2_{L^2}$, while integrating by parts
\begin{eqnarray*}
 \frac12\frac{d}{dt} \int_\Omega|u(x,t) - \bar u|^2\,dx &=&
 \int_\Omega (u-\bar u) u_t\,dx = \int_\Omega (u-\bar u) \left(W_p(u_x)\right)_x\\
 &=& - \int_\Omega W_p(u_x) u_x  = -  \int_\Omega | W_p(u_x) | \sgn u_x \cdot u_x \,dx.
\end{eqnarray*}
We used here monotonicity of $W_p$, which implies that $ W_p(u_x) u_x  = | W_p(u_x) | | u_x|$. Hence,
\begin{eqnarray*}
 \frac12\frac{d}{dt} \int_\Omega|u(x,t) - \bar u|^2\,dx &\le&
 -  \int_\Omega \alpha |u_x|\,dx  \le - C_p^{-1} \| u- \bar u\|_{L^2}.
\end{eqnarray*}
Here, we used the Poincar\'e's inequality, $\| u- \bar u\|_{L^2}\le C_p \| u_x\|_{L^1}$.
We conclude that 
$$
\frac{d}{dt}\| u- \bar u\|_{L^2} \le -  C_p,
$$
what implies that $T_{ext} \le C_p\| u_0\|_{L^2}$.
\end{proof}

\section*{Acknowledgement}
A part of the research was conducted during the visits of the second author to the University of Tokyo whose hospitality and  support is thankfully acknowledged. PR enjoyed also a partial support of the EU IRSES program ``FLUX'' and the Polish Ministry of Science and Higher Education  grant number 2853/7.PR/2013/2.

\end{document}